\newcommand{\BZ}{{\mathbb{Z}}}
\newcommand{\BN}{{\mathbb{N}}}
\newcommand{\BR}{{\mathbb{R}}}
\newcommand{\BC}{{\mathbb{C}}}
\newcommand{\BF}{{\mathbb{F}}}
\newcommand{\gD}{\Delta}
\newcommand{\gd}{\delta}
\newcommand{\gC}{\Gamma}
\newcommand{\gc}{\gamma}
\newcommand{\gs}{\sigma}
\newcommand{\gS}{\Sigma}
\newcommand{\gep}{\epsilon}
\newcommand{\gL}{\Lambda}
\newcommand{\ga}{\alpha}
\newcommand{\gt}{\tau}
\newcommand{\ti}[1]{\tilde{#1}}
\newcommand{\rank}{\mathrm{rank}}
\newcommand{\vol}{\mathrm{vol}}
\newcommand{\Min}{\mathrm{Min}}
\newcommand{\SL}{\mathrm{SL}}
\newcommand{\GL}{\mathrm{GL}}
\newcommand{\PSL}{\mathrm{PSL}}
\newcommand{\SO}{\mathrm{SO}}
\newtheorem{prop}{Proposition}[section]
\newtheorem{thm}[prop]{Theorem}
\newtheorem{lem}[prop]{Lemma}
\newtheorem{cor}[prop]{Corollary}
\theoremstyle{definition}
\newtheorem{rem}[prop]{Remark}
\newtheorem{clm}[prop]{Claim}
\title{Volume vs. rank of lattices}
\author{T. Gelander}
\begin{document}

\maketitle

\begin{abstract}
We show the rank (i.e. minimal size of a generating set) of lattices cannot grow faster than the volume. 
\footnote{Classification: 22E40 Discrete subgroups of Lie groups.
Keywords: Lie groups. Lattices.} 
\end{abstract}

\section{The main result and some consequences}

For a group $\gC$ we denote by $d(\gC)$ the minimal cardinality of a generating set. When considering the class of finite index subgroups $\gC$ in some given finitely generated group $\gD$, it is easy to show that $d(\gC)$ is at most $(d(\gD)-1)[\gD:\gC]+1$ and in particular bounded linearly by the index. We prove the analog statement when $\gD$ is replaced by a connected semisimple Lie group $G$.

\begin{thm}\label{thm:d(Gamma)}
Let $G$ be a connected semisimple Lie group without compact factors. Then there is an eﬀectively 
computable constant $C=C(G)$ such that
$$
 d(\gC)\le C\vol(G/\gC),
$$
for every irreducible lattice $\gC\le G$.
\end{thm}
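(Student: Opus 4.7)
The plan is to combine Kazhdan--Margulis normalization with a net-counting argument on the thick part of the locally symmetric space $M = \Gamma\backslash X$, where $X = G/K$ is the associated symmetric space, together with a separate treatment of the thin part via the Margulis lemma. Each ingredient will contribute $O(\vol(M))$ generators, and their union will generate $\Gamma$.

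First I would set up the geometric framework. By the Kazhdan--Margulis theorem there is an open identity neighborhood $\Omega \subset G$, depending only on $G$, such that after replacing $\Gamma$ by a suitable conjugate we have $\Gamma \cap \Omega = \{e\}$. Fix a Margulis constant $\varepsilon = \varepsilon(G) > 0$. The thick part $M_{\ge \varepsilon}$ has injectivity radius bounded below by a constant depending only on $\varepsilon$, so a maximal $\varepsilon$-separated net in $M_{\ge \varepsilon}$ has cardinality at most $C_1 \vol(M)$ by a standard volume-packing argument. Lift this net to a collection of basepoints $\{x_i\} \subset X$, and for each pair whose images in $M$ lie within distance $\varepsilon$ choose a single element $\gamma_{ij} \in \Gamma$ realizing the matching. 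A classical ``fundamental set generates'' argument then shows that the $\gamma_{ij}$ generate the subgroup $\Gamma_{\mathrm{thick}} \le \Gamma$ carried by loops in the thick part, and there are at most $O(\vol(M))$ of them.

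Next I would handle the thin part. By the Margulis lemma each connected component of $M_{<\varepsilon}$ has virtually nilpotent $\pi_1$, corresponding to a subgroup $N \le \Gamma$ stabilizing either a short closed geodesic (a Margulis tube) or a cusp (in the non-uniform case). In either case $N$ embeds as a lattice in a nilpotent Lie subgroup of $G$, so by Malcev its rank is bounded by a constant depending only on $\dim(G)$. The number of thin components is itself $O(\vol(M))$: for cusps this is classical, and for Margulis tubes it follows from a uniform volume lower bound per tube after shrinking $\varepsilon$ slightly. Appending the $O(1)$ generators of each thin-component stabilizer to $\{\gamma_{ij}\}$ produces a candidate generating set of size $O(\vol(M))$.

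The main obstacle I expect is verifying that the combined set actually generates all of $\Gamma$, and not merely the group generated by thick-part loops and cusp stabilizers taken abstractly. One must show that every loop in $M$ that excurses into a thin component can be decomposed into a thick-part loop, an element of the thin-component stabilizer, and its return, all of which have already been counted. In real rank at least two Margulis arithmeticity is available and the Siegel-set reduction theory of arithmetic groups gives an explicit cuspidal fundamental domain that makes this matching transparent; in real rank one the horospherical model of cusps together with a Jorgensen-type separation of cusp subgroups plays the same role. Tracking the constant uniformly across all these cases, in particular for non-arithmetic rank-one lattices, is the technical heart of the argument and where I expect the bulk of the work to go.
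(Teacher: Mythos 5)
Your proposal has the right general flavor (thick--thin decomposition, net counting on the thick part, Margulis lemma on the thin part) and this is indeed close in spirit to the approach of \cite{BGS,Ge} for \emph{torsion-free} lattices. But it misses the technical ideas that constitute the actual content of the theorem in the presence of torsion, which is precisely the case the paper is adding to what was previously known.

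The central gap is torsion. Kazhdan--Margulis normalization does not eliminate torsion: a torsion element $\gamma\in\Gamma$ can fix points of $X$, and near such a fixed point $d_\gamma$ is arbitrarily small. Consequently $\Gamma\backslash X$ is an orbifold with singularities, there is no positive injectivity radius on the ``thick part'' near the singular locus, and the covering-theory step you invoke (``a fundamental set generates'') is no longer valid as stated. The paper's resolution is to cut out a $\Gamma$-invariant singular set $\ti N$ consisting of all fixed-point sets of torsion elements together with all short closed geodesics, prove via Lemma~\ref{lem:codim} and Corollary~\ref{cor:codim} that $\ti N$ has codimension $\ge 2$ (so $X\setminus\ti N$ stays connected), and then construct a Morse function $\psi$ on $M\setminus N$ with no positive critical values (Proposition~\ref{prop:gradient}). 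This yields a deformation retract onto a compact, connected set $\psi_{\le\delta}$ on which the action is \emph{free}, so $\Gamma$ is a quotient of $\pi_1(\psi_{\le\delta})$ by plain covering theory. Your proposal contains no analog of the codimension-2 lemma, no analog of the Morse function, and no mechanism producing a connected set with a free $\Gamma$-action; without these the ``combine thick and thin generators'' step has no justification.

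The obstacle you yourself flag --- proving that the combined set actually generates $\Gamma$ --- is the crux, and the remedies you suggest do not work. Margulis arithmeticity plus Siegel-set reduction would not yield a constant $C(G)$ depending only on $G$ (the Siegel set depends on the number field, the $\Q$-rank, the choice of parabolics, etc.), and it is silent on non-arithmetic rank-one lattices, which you acknowledge but then defer to a ``Jorgensen-type'' argument that is specific to $\PSL_2(\BC)$. Nontrivially, the paper's proof is completely uniform and does not invoke arithmeticity or property (T) at any point; that uniformity is exactly what the Morse-theoretic retract buys. A secondary issue: your packing bound on thin components relies on a uniform volume lower bound per Margulis tube, which is true but is really a disguised form of the same analysis the paper does directly via the sublevel sets $\psi_{\le\delta}$; and your thick-part loop counting gives generators only for the image of $\pi_1$ of the thick part, which in general is a proper subgroup.
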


Theorem \ref{thm:d(Gamma)} for rank one groups was proved in \cite[Section 2]{BGLS}. For torsion free lattices Theorem \ref{thm:d(Gamma)} was proved in \cite{Ge} where also bounds an the number of {\it short} relations were given. The proof of the general case encounters several difficulties which do not appear in rank one and in the torsion free case.

Theorem \ref{thm:d(Gamma)} implies in particular the well known but nontrivial fact that every lattice is finitely generated.
Originally the finiteness of $d(\gC)$ was proved case by case by many different authors, notably is the work of Garland and Raghunathan \cite{Ga-Ra} for $\rank(G)=1$ and of Kazhdan \cite{Kazhdan} for the case where all the factors of $G$ have rank $\ge 2$.\footnote{Note in particular that our proof does not rely on Kazhdan's property (T).} 
In \cite{BGS} a geometric proof was given under no assumption on $G$ but assuming that $\gC$ is torsion free, i.e. that $\gC\backslash G/K$ is a manifolds. Furthermore, in \cite{BGS} an analog result was proved for more general manifolds of nonpositive curvature which are not necessarily locally symmetric. Our proof is inspired by \cite{BGS}, but
holds under no assumption on $G$ or on $\gC$. 

Note also that Theorem \ref{thm:d(Gamma)} implies the classical Kazhdan--Margulis theorem:

\begin{cor}[Kazhdan--Margulis Theorem \cite{KM}]
For any semisimple Lie group without compact factors $G$, there is a positive lower bound on the covolume of lattices.  
\end{cor}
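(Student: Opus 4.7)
My plan is to deduce the positive lower bound on covolumes from Theorem~\ref{thm:d(Gamma)} by reducing the general case to the irreducible one, where Theorem~\ref{thm:d(Gamma)} applies directly.

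First, since $G$ is nontrivial and has no compact factors it is noncompact, so every lattice $\gC\le G$ is infinite and in particular satisfies $d(\gC)\ge 1$. For an irreducible lattice Theorem~\ref{thm:d(Gamma)} then gives at once $\vol(G/\gC)\ge 1/C(G)$, which is exactly the desired conclusion.

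For a possibly reducible lattice I would induct on the number $k$ of simple factors of $G$. The case $k=1$ is the irreducible case just treated (a lattice in a simple group is automatically irreducible). For $k\ge 2$ and $\gC$ reducible, pick a nontrivial splitting $G=G_A\times G_B$ into products of simple factors with respect to which $\gC$ decomposes; by the standard equivalent characterizations of reducibility, $\gL_B^0:=\gC\cap G_B$ is then a lattice in $G_B$, and consequently $\pi_A(\gC)$ is a discrete subgroup of $G_A$ (its kernel on $\gC$ being $\gL_B^0$) that contains the lattice $\gC\cap G_A$ with finite index, hence is itself a lattice in $G_A$. A direct comparison of fundamental domains of $\gC$ and of the product sublattice $(\gC\cap G_A)\times\gL_B^0\le\gC$, using the identity $[\gC:(\gC\cap G_A)\times\gL_B^0]=[\pi_A(\gC):\gC\cap G_A]$, yields the covolume formula
\[
\vol(G/\gC)=\vol(G_A/\pi_A(\gC))\cdot\vol(G_B/\gL_B^0)\ge\vol(G_A/\pi_A(\gC))\cdot\vol(G_B/\pi_B(\gC)),
\]
the inequality coming from $\gL_B^0\le\pi_B(\gC)$. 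Since $G_A$ and $G_B$ each have strictly fewer simple factors than $G$, both factors on the right are bounded below by the inductive hypothesis.

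The main obstacle is the bookkeeping in the reducible case: one has to exhibit the splitting $G=G_A\times G_B$ adapted to $\gC$, verify that both projections of $\gC$ are again lattices (standard consequences of the equivalent definitions of reducibility of lattices in semisimple Lie groups), and carry out the index/covolume computation above. With these in hand the induction closes without further input and produces a positive lower bound on $\vol(G/\gC)$ depending only on $G$.
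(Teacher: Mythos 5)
Your proof is correct, and the core step is the same as the paper's: invoke Theorem~\ref{thm:d(Gamma)} to get $\vol(G/\gC)\ge d(\gC)/C$ and observe that $d(\gC)\ge 1$ (the paper uses $d(\gC)\ge 2$, which also holds since a lattice in a noncompact semisimple group cannot be cyclic). The genuine value you add is the explicit induction that reduces the general lattice to the irreducible case. Theorem~\ref{thm:d(Gamma)} is stated only for irreducible lattices, while the corollary concerns all lattices; the paper's one-line proof says ``for every lattice'' and silently elides this reduction. Your decomposition $G = G_A\times G_B$, the observations that $\gC\cap G_B$ is a lattice in $G_B$ and that $\pi_A(\gC)$ is a lattice in $G_A$ (this is Raghunathan's theorem on lattices and closed normal subgroups), and the multiplicativity of covolume $\vol(G/\gC)=\vol(G_A/\pi_A(\gC))\cdot\vol(G_B/(\gC\cap G_B))$ are all correct and constitute the standard way to close this gap. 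So your argument is a slightly more complete version of the paper's, not a different route, and the extra bookkeeping you flag is precisely the content the paper chose to leave implicit.
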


\begin{proof}
Indeed, 
$$
 \vol (G/\gC)\ge\frac{d(\gC)}{C}\ge\frac{2}{C},
$$
for every lattice $\gC\le G$.
\end{proof}
Our proof also gives the stronger statement of Kazhdan--Margulis' theorem that there is an identity neighborhood in $G$ which intersects trivially a conjugate of every lattice (see Remark \ref{rem:K-M}).

In general, the linear estimate in Theorem \ref{thm:d(Gamma)} cannot be improved. For instance, for every $n\ge 2$ the group $G=\SO(n,1)$ admits a lattice $\gC$ which projects onto the free group $F_2$ (see \cite{Lub1}), hence taking $\gC_k$ in $\gC$ to be the pre-image of an index $k$ subgroup of $F_2$, we get $d(\gC_k)\ge k+1$ while $\vol(G/\gC_k)=k\cdot\vol (G/\gC)$. On the other hand, in general one cannot give a lower bound on $d(\gC)$ which tends to $\infty$ with $\vol(G/\gC)$. Moreover, by \cite{SV}, when $\rank_\BR(G)\ge 2$, every nonuniform lattice in $G$ admits a $3$-generated finite index subgroup. 

Another immediate consequence of Theorem \ref{thm:d(Gamma)} is that the first Betti number grows at most linearly with the covolume:

\begin{cor}\label{cor:betti}
The first Betti number $b_1(\gC,A)$ of a lattice $\gC\le G$, with respect to an arbitrary ring $A$, is at most $C\vol (G/\gC)$. 
\end{cor}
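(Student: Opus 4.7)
The plan is to reduce the statement immediately to Theorem \ref{thm:d(Gamma)} by observing that the first Betti number of any finitely generated group is bounded by the minimal number of generators, and this inequality survives the passage to an arbitrary coefficient ring.

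First I would recall that $H_1(\gC,\BZ)$ is canonically isomorphic to the abelianization $\gC^{ab}=\gC/[\gC,\gC]$. Any generating set of $\gC$ projects to a generating set of $\gC^{ab}$, so the latter is generated, as an abelian group, by at most $d(\gC)$ elements. In particular the $\BZ$-rank of $\gC^{ab}$ is at most $d(\gC)$.

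Next, by the universal coefficient theorem
\[
 H_1(\gC,A)\;\cong\;\bigl(H_1(\gC,\BZ)\otimes_\BZ A\bigr)\oplus\mathrm{Tor}_1^\BZ(H_0(\gC,\BZ),A),
\]
and since $H_0(\gC,\BZ)=\BZ$ is torsion free the Tor term vanishes. Hence $H_1(\gC,A)=\gC^{ab}\otimes_\BZ A$ is generated as an $A$-module by the images of any generating set of $\gC$, and so its rank (equivalently, its minimal number of $A$-generators) is at most $d(\gC)$. Under any reasonable interpretation of $b_1(\gC,A)$ for a ring $A$—the $A$-rank when $A$ is a field, or the minimal number of $A$-generators in general—one therefore has $b_1(\gC,A)\le d(\gC)$.

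Finally I would apply Theorem \ref{thm:d(Gamma)} to conclude
\[
 b_1(\gC,A)\;\le\;d(\gC)\;\le\;C\,\vol(G/\gC),
\]
with the same constant $C=C(G)$. There is essentially no obstacle here: the only subtlety is fixing the definition of $b_1(\gC,A)$ for a general ring, but in every standard convention the bound by $d(\gC)$ is immediate from the fact that $H_1$ is a quotient of $\gC$ with coefficients extended to $A$.
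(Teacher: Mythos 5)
Your argument is correct and follows essentially the same route as the paper, which simply records the chain $b_1(\gC,A)\le d(\gC)\le C\vol(G/\gC)$ without further comment. You have merely filled in the (standard) justification for the first inequality via the abelianization and the universal coefficient theorem, which is exactly what the paper leaves implicit.
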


\begin{proof}
$$
 b_1(\gC,A)\le d(\gC)\le C\vol (G/\gC).
$$
\end{proof}

Corollary \ref{cor:betti} extends Theorem 2 from \cite{BGS} to locally symmetric orbifolds rather than manifolds. However, while in \cite{BGS} it is shown that {\it all} the Betti numbers are bounded by the volume of the manifold, here we obtain a bound only on the first Betti number. It is likely that the higher Betti numbers of orbifolds are bounded by the volume as well. We refer to \cite{Fi-Gr} for some lower bounds on $b_1$ of some arithmetic lattices in $\SL_2(\BC)$. When $G$ has property $(T)$, the first Betti number with coefficients in $\BZ$, $b_1(\gC,\BZ)$ vanishes for every lattice $\gC\le G$, however for rings with torsion $b_1(\gC,A)$ is typically nonzero. For instence, by \cite{Lub2}, every finitely generated linear group admits a finite index subgroup which is mapped onto $\BF_p$ where $p$ is an arbitrary rational prime.

As another consequence we obtain
a concrete upper bound on the number $AL_G(v)$ of conjugacy classes of arithmetic subgroups of $G$ of covolume at most $v$:

\begin{cor}\label{cor:AL}
Let $G$ be a connected semisimple lie group without compact factors. Then for every $\gep>0$
$$
 AL_G(v)\le v^{(1+\gep)Cv},
$$
for all $v\gg 0$.
\end{cor}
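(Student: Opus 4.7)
The plan is to bound $AL_G(v)$ by encoding each arithmetic $\gC$ with $\vol(G/\gC)\le v$ as a small combinatorial datum, built from a short generating set provided by Theorem \ref{thm:d(Gamma)}. Three ingredients feed in: (i) a generating set of cardinality $\le Cv$; (ii) a normalization of $\gC$ inside $G$ that places those generators in a ball of radius $O(\log v)$; and (iii) isolation of arithmetic lattices in the Chabauty topology, forcing nearby combinatorial data to collapse to the same conjugacy class.

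Fix $\gC\le G$ arithmetic with $v_0=\vol(G/\gC)\le v$, and let $\gc_1,\ldots,\gc_d$ be a generating set with $d\le Cv_0\le Cv$ as provided by Theorem \ref{thm:d(Gamma)}. By the strong form of Kazhdan--Margulis recorded in Remark \ref{rem:K-M}, after conjugating $\gC$ in $G$ I may assume $\gC\cap U=\{e\}$ for a fixed identity neighborhood $U=U(G)$; in particular, distinct points of $\gC$ are $\gd_0$-separated in $G$ for some $\gd_0=\gd_0(G)>0$. Appealing to the geometric construction underlying the proof of Theorem \ref{thm:d(Gamma)}---covering a thick part of the orbifold $\gC\backslash G/K$ by $O(v)$ balls of fixed radius and reading the generators as pairings between adjacent balls---I can represent the $\gc_i$ in $G$ with displacement at most
\[
 R\ \le\ \gl^{-1}\bigl(1+\tfrac{\gep}{3}\bigr)\log v,
\]
where $\gl>0$ is the exponential volume growth rate on $G/K$. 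Fixing a $(\gd_0/3)$-net $\mathcal{N}\subset B_R(e)$, a packing estimate yields $|\mathcal{N}|\le \vol(B_R)/\vol(B_{\gd_0/6})\le v^{1+\gep/2}$ for $v\gg 0$. Rounding each $\gc_i$ to its closest $n_i\in\mathcal{N}$ now defines a map from KM-normalized arithmetic lattices of covolume $\le v$ to $\mathcal{N}^d$.

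It remains to verify that this rounding map is at most $v^{o(v)}$-to-one on arithmetic lattices. Two KM-normalized arithmetic $\gC,\gC'$ producing the same net-tuple admit generating tuples $(\gc_i),(\gc_i')$ with $\gc_i(\gc_i')^{-1}\in B_{2\gd_0/3}(e)$; local rigidity (Weil and Margulis in higher rank; Mostow--Prasad with Takeuchi's classification in rank one) together with the Wang--Borel--Prasad finiteness of arithmetic lattices of bounded covolume then isolates the Chabauty class of $\gC$, forcing $\gC'=h\gC h^{-1}$ with $h$ in a fixed compact set and only boundedly many possibilities after a further KM-normalization. Assembling,
\[
 AL_G(v)\ \le\ |\mathcal{N}|^{d}\cdot v^{o(v)}\ \le\ v^{(1+\gep/2)Cv+o(v)}\ \le\ v^{(1+\gep)Cv}
\]
for $v\gg 0$. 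The principal obstacle is input (ii): distilling from the proof of Theorem \ref{thm:d(Gamma)} that the generators can be placed in a ball of radius $O(\log v)$, not merely $O(v)$. This rests on a careful thick--thin analysis of $\gC\backslash G/K$ that treats both cocompact and non-uniform lattices uniformly (the cusps of non-uniform lattices being the main technical nuisance), and it is the quantitative geometric output of the Theorem \ref{thm:d(Gamma)} construction that must be tracked throughout in order to recover the exponent $(1+\gep)Cv$.
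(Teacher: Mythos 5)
There is a genuine gap, and it is exactly the point you flag as ``the principal obstacle'': the claim that a KM-normalized arithmetic lattice of covolume $\le v$ admits a generating set of size $\le Cv$ whose elements lie in a ball of radius $O(\log v)$. This is not something that can be ``distilled'' from the proof of Theorem \ref{thm:d(Gamma)}, and in fact it is false in general. The thick part of $M=\gC\backslash X$ can have diameter linear in $\vol(M)$ (e.g.\ cyclic covers of a fixed arithmetic hyperbolic surface, or the analogous ``long thin'' covers coming from the lattices in $\SO(n,1)$ surjecting onto $F_2$ mentioned in the paper; finite index subgroups of arithmetic lattices are arithmetic, so these examples are not excluded). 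Once the thick part has diameter $\Theta(v)$, any generating set whose elements are measured from a single KM-normalized basepoint must contain elements of displacement $\Theta(v)$, since loops based at one point must reach the far end of the orbifold. With $R=\Theta(v)$ your net has size $e^{\Theta(v)}$ and the encoding gives only $|\mathcal{N}|^{d}\le e^{O(v^2)}$, which is far weaker than $v^{(1+\gep)Cv}$. In addition, the fiber bound in step (iii) is not a proof: local rigidity controls deformations of a fixed group, but two different lattices whose generating tuples are $\gd_0$-close need not be related by any homomorphism at all, so ``isolation in the Chabauty topology'' does not by itself make the rounding map $v^{o(v)}$-to-one; this step would need the Borel--Prasad/Wang finiteness machinery set up much more carefully, and arithmeticity is used nowhere else in your argument, which is a warning sign since the statement is false for the class of all lattices in some $G$.

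The paper's route is different and avoids both problems: it simply invokes the argument of \cite{BGLS}, with Theorem \ref{thm:d(Gamma)} replacing the rank one rank-vs-volume bound. That argument uses arithmeticity through maximal lattices: every arithmetic $\gC$ of covolume $\le v$ sits in a maximal arithmetic lattice $\gL$, the number of conjugacy classes of such $\gL$ of covolume $\le v$ is only polynomial in $v$ (Belolipetsky, with Ellenberg--Venkatesh), and then one counts subgroups of index $n\le v/\vol(G/\gL)$ in $\gL$ by elementary subgroup growth: a $d$-generated group has at most roughly $(n!)^{d}$ subgroups of index $n$, and $d(\gL)\le C\vol(G/\gL)$ gives $(n!)^{d(\gL)}\le v^{(1+\gep)Cv}$ for $v\gg0$. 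So the linear rank bound enters through subgroup growth of the (possibly small-covolume) maximal overlattice, not through a geometric encoding of generators of $\gC$ itself; if you want a complete write-up you should reproduce that argument rather than the net construction.
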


Corollary \ref{cor:AL} was proved in \cite{BGLS} for rank one groups (see \cite[Theorem 1.1]{BGLS}). 
Using Theorem \ref{thm:d(Gamma)} instead of the rank one case given in \cite{BGLS}, Corollary \ref{cor:AL} is proved by the argument given in \cite{BGLS} also for higher rank groups.
It was also shown in \cite{BGLS} that for $\SO(n,1)$ this estimate is sharp in the sense that $AL_{\SO(n,1)}(v)\ge v^{av}$ for some constant $a>0$ and $v$ sufficiently large, and for $G=\SL(2,\BR)$ the following precise asymptotic formula was given 
$$
 \log AL_{\SL(2,\BR)}(v)\sim \frac{v}{2\pi}\log v
$$ 
where the Haar measure of $\SL(2,\BR)$ is the one corresponding to the hyperbolic measure on $\SL(2,\BR)/\SO(2)$. When $\rank(G)>1$ the estimate given in Corollary \ref{cor:AL} is somewhat weak. In fact Belolipetsky and Lubotzky \cite{BL} have shown recently that if all lattices in $G$ posses the Congruence Subgroup Property then 
$$
 v^{a\log v}\le AL_G(v)\le v^{b\log v},
$$
for some $a,b>0$ and all $v\gg 0$. It was conjectured by Serre that the CSP holds for all irreducible lattices in all higher rank semisimple Lie groups, and this conjectured was confirmed in many cases. But the general case, and in particular when $\gC$ is a uniform lattice in $\SL_{n\ge 3}(\BR)$, is unknown.  
Corollary \ref{cor:AL} holds under no assumptions. An upper bound as in Corollary \ref{cor:AL} was proved for {\it torsion free} lattices (which are not necessary arithmetic) in \cite{BGLM,Ge} when $G$ was assumed to be not locally isomorphic to $\SL(2,\BR),\SL(2,\BC),\SL(3,\BR)$ and to $\SL(2,\BR)\times\SL(2,\BR)$.
Note however that even when restricting to torsion free lattices, in the special cases of $G=\SL(3,\BR)$ or $\SL(2,\BR)\times\SL(2,\BR)$ no quantitative estimate was given until today.

\medskip

\medskip

\noindent
{\bf About the proof:} The basic idea is to deform the symmetric space $X=G/K$ to nice connected $\gC$-invariant subset $Y\subset X$ where the displacement of every $\gc\in\gC\setminus\{1\}$ is bounded below by a uniform constant, and deduce that $\gC$ is a quotient of $\pi_1(Y/\gC)$ while the topology of $Y/\gC$ is bounded by its volume since it is uniformly thick.
We use techniques similar to \cite{BGS,Ge}.

\section{The proof}

\subsection{Some notation and background}
Let $G$ be a connected center-free semisimple Lie group, $K\le G$ a maximal compact subgroup of $G$ and $X=G/K$ the associated Riemannian symmetric space.
$X$ equipped with the analytic Riemannian metric coming from the Killing form of $G$ is nonpositively curved, i.e.\ the distance function $d:X\times X\to \BR$ is convex. 

For $g\in G,~x\in X$ we denote by 
$$
 d_g(x)=d(x,g\cdot x)
$$ 
the displacement of $g$ at $x$, and by 
$$
 \Min(g):=\{x\in X:d_g(x)=\inf(d_g)\}
$$ 
the set where $d_g$ attains its infimum. Recall that $g$ is {\it semisimple} iff $\Min(g)\ne\emptyset$ (equivalently, iff $g$ belongs to a torus in $G$), and otherwise $g$ is called {\it parabolic}. 

A semisimple element $g$ is {\it elliptic} if $\inf d_g=0$ in which case $\Min(g)$ is also denoted by 
$$
 \text{Fix}(g)=\{x\in X:g\cdot x=x\},
$$ 
and otherwise it is {\it hyperbolic} and admits an axis on which it translates by a positive amount and the set $\Min(g)$ consists of the union of all its axis. In either case $\Min(g)$ is a complete totally geodesic submanifold. 

$G$ admits a structure of a linear real algebraic group, and an element $g\in G$ is semisimple iff it is diagonalizable, and is {\it unipotent} iff all its eigenvalues are $1$. These properties are independent of the chosen representation of $G$. Moreover, a unipotent element $g\in G$ is parabolic with $\inf d_g=0$ (the converse is not true in general).

For $t\ge 0$ we denote by $\{d_g\le t\}$ the sublevel set $\{x:d_g(x)\le t\}$. Since $X$ has nonpositive sectional curvature, $\{ d_g\le t\}$ is convex.

\subsection{A co-dimension 2 condition}

Write $c~\|~c'$ to express that two geodesic lines $c,c'$ are parallel, and denote 
$$
 \mathfrak{I}(c):=\bigcup \{ c'(\BR):c'~\|~c\}
$$
the union of all traces of geodesic lines in $X$ parallel to $c$. 

\begin{lem}\label{lem:codim}
Let $X$ be an irreducible symmetric space of noncompact type. If for some geodesic line $c$, $\mathfrak{I}(c)$ has codimension one, then $X$ is isometric to the hyperbolic plane $\mathcal{H}^2$.
\end{lem}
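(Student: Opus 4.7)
The plan is to identify the parallel set $\mathfrak{I}(c)$ with the orbit of a centralizer and then extract its dimension from the restricted root space decomposition. I would fix the basepoint $o=c(0)$, write the Cartan decomposition $\mathfrak{g}=\mathfrak{k}\oplus\mathfrak{p}$ with $T_oX\cong\mathfrak{p}$, and take the unit vector $H\in\mathfrak{p}$ so that $c(t)=\exp(tH)\cdot o$. Embedding $H$ into a maximal abelian subspace $\mathfrak{a}\subseteq\mathfrak{p}$ of dimension $r=\rank X$, the flat strip theorem tells me that $\mathfrak{I}(c)$ is a closed convex totally geodesic submanifold through $o$, isometric to a product $\BR\times Z$. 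The key identification I would establish is
\[
T_o\mathfrak{I}(c)=\mathfrak{p}^H:=\{Y\in\mathfrak{p}:[H,Y]=0\},
\]
since two geodesics through $o$ bound a flat strip precisely when their initial directions span an abelian subspace of $\mathfrak{p}$.

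Next, I would invoke the restricted root decomposition $\mathfrak{p}=\mathfrak{a}\oplus\bigoplus_{\alpha\in\Sigma^+}\mathfrak{p}_\alpha$ with multiplicities $m_\alpha=\dim\mathfrak{p}_\alpha$, yielding
\[
\mathfrak{p}^H=\mathfrak{a}\oplus\bigoplus_{\alpha\in\Sigma^+,\,\alpha(H)=0}\mathfrak{p}_\alpha,\qquad \mathrm{codim}_X\mathfrak{I}(c)=\sum_{\alpha\in\Sigma^+,\,\alpha(H)\ne 0}m_\alpha.
\]
Under the codimension-one hypothesis, a unique positive root $\alpha_0$ satisfies $\alpha_0(H)\ne 0$, with $m_{\alpha_0}=1$, while every other positive root vanishes on $H$.

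The hard part will be ruling out the case $r\geq 2$; this is exactly where irreducibility of $X$ (equivalently, irreducibility of the restricted root system $\Sigma$) must enter. My plan is to show that $\Sigma\setminus\{\pm\alpha_0\}$ still spans $\mathfrak{a}^*$, which would force $H=0$, contradicting $\alpha_0(H)\ne 0$. Picking a simple system $\{\alpha_1,\ldots,\alpha_r\}\subseteq\Sigma^+$, either $\alpha_0$ is not simple (so the simple roots already lie in $\Sigma\setminus\{\pm\alpha_0\}$ and form a basis), or $\alpha_0=\alpha_i$ is simple; in the latter case, connectedness of the Dynkin diagram supplies an adjacent simple root $\alpha_j$, and $\alpha_i+\alpha_j\in\Sigma$ replaces $\alpha_i$ to produce a basis of $\mathfrak{a}^*$ inside $\Sigma\setminus\{\pm\alpha_0\}$. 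Once $r=1$ is forced, the codimension formula immediately yields $\dim X=2$, and since the only irreducible rank-one Riemannian symmetric space of noncompact type of dimension $2$ is $\mathcal{H}^2$, the lemma follows.
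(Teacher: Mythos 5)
Your proof is correct, but it takes a genuinely different route from the paper's. The paper argues through the parabolic subgroups $P^\pm$ stabilizing the two endpoints of $c$ in $\partial X$: it observes that for $g\in P^+$ one has $g\cdot c(0)\in\mathfrak{I}(c)$ iff $g\in P^+\cap P^-$, uses the transitivity of $P^+$ on $X$ to conclude $\dim P^+/(P^+\cap P^-)=\dim X-\dim\mathfrak{I}(c)=1$, and then exploits the openness of the big Bruhat cell $P^+P^-$ to deduce that $G/P^-$ is a compact one-dimensional manifold, hence $S^1$; since $\PSL_2(\BR)$ is the only centerfree simple Lie group acting nontrivially on the circle, $X\cong\mathcal{H}^2$. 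You instead compute the same codimension at the level of the Cartan decomposition, identifying $T_o\mathfrak{I}(c)$ with the centralizer $\mathfrak{p}^H$ and reading off $\mathrm{codim}\,\mathfrak{I}(c)=\sum_{\alpha(H)\ne 0}m_\alpha$ from the restricted root decomposition, then use connectedness of the Dynkin diagram of the irreducible restricted root system to show $\Sigma\setminus\{\pm\alpha_0\}$ still spans $\mathfrak{a}^*$, killing the rank $\ge 2$ case. Both arguments are sound and both really rest on the same dimension count ($P^+\cap P^-$ is the Levi, whose Lie algebra meets $\mathfrak{p}$ in precisely $\mathfrak{p}^H$); the paper's version is slicker and avoids explicit root-system casework by passing to the topology of the flag variety $G/P^-$, while yours is more elementary and self-contained, needing only the flat-strip description of parallel sets and standard facts about (possibly non-reduced) restricted root systems — including the observation that proportional roots $\alpha,2\alpha$ would contribute at least $2$ to the codimension, which your argument handles correctly.
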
 

\begin{proof}
Let $p^+,p^-\in \partial X$ be the endpoints of $c$ in the ideal boundary of $X$, and let $P^+,P^-\le G=\text{Isom}(X)^\circ$ be the corresponding parabolic subgroups. 
Observe that for $g\in P^+$, $g\cdot c(0)\in\mathfrak{I}(c)$ iff $g\in P^-$. Indeed,  if $g\in P^-\cap P^+$ then $g\cdot c$ is parallel to $c$, and vice versa: if $g\cdot c(0)\in\mathfrak{I}(c)$, the geodesic line through $g\cdot c(0)$ parallel to $c$ has one end $p^+$, and since this geodesic is determined by $g\cdot c(0)$ and $p^+=g\cdot p^+$ it coincides with $g\cdot c$, and its other end point $p^-$ has to be $g\cdot p^-$.
Since $P^+$ acts transitively on $X$, we deduce that 
$$
 \dim P^+/P^+\cap P^-=\dim X-\dim\mathfrak{I}(c)=1.
$$ 
Furthermore, since $P^+P^-$ is open in $G$, being the big cell in a Bruhat decomposition, the orbit $P^+\cdot p^-$ is open in $G\cdot p^-\sim G/P^-$. It follows that $G/P^-$ is one dimensional, hence, being compact, is homeomorphic to the circle $S^1$. However the only center free simple Lie group that acts nontrivially on $S^1$ is $\PSL_2(\BR)$. Hence $G\cong\PSL_2(\BR)$ and $X\cong\mathcal{H}^2$.
\end{proof}

\begin{cor}\label{cor:codim}
Let $G$ be a connected center-free semisimple Lie group without compact factors and not isometric to $\PSL_2(\BR)$, and let $g\in G$ be an element whose projection to every factor of $G$ is nontrivial. 
Then $\text{codim}_X\Min(g)\ge 2$.
\end{cor}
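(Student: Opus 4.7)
The plan is to reduce to the irreducible case via the product decomposition of $G$, and then split the irreducible case according to the Jordan type of $g$.

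If $g$ is not semisimple, $\Min(g)=\emptyset$ and the statement is vacuous, so assume $g$ is semisimple. Decompose $G=G_1\times\cdots\times G_n$ into simple factors, with $X=X_1\times\cdots\times X_n$ and $g=(g_1,\ldots,g_n)$; each $g_i$ is then semisimple (by uniqueness of Jordan decomposition) and nontrivial by hypothesis. With the product Riemannian metric,
$$
d_g(x_1,\ldots,x_n)^2=\sum_i d_{g_i}(x_i)^2,
$$
so $\Min(g)=\prod_i\Min(g_i)$ and codimensions add. If $n\ge 2$, each $\Min(g_i)\subsetneq X_i$ contributes codimension at least $1$ (since $g_i\ne 1$), so the total is at least $n\ge 2$.

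It remains to treat $n=1$, i.e.\ $G$ simple and not isomorphic to $\PSL_2(\BR)$, so $X$ is irreducible and not isometric to $\mathcal{H}^2$. If $g$ is hyperbolic with axis $c$, every axis of $g$ is parallel to $c$, hence $\Min(g)\subseteq\mathfrak{I}(c)$. Lemma \ref{lem:codim} rules out codimension $1$ for $\mathfrak{I}(c)$, while irreducibility of $X$ rules out $\mathfrak{I}(c)=X$ (which would give a parallel line field on $X$ and force a de Rham splitting off an $\BR$-factor); hence $\text{codim}\,\mathfrak{I}(c)\ge 2$, and so does $\text{codim}\,\Min(g)$. If $g$ is elliptic, pick $p\in\text{Fix}(g)$ and consider the isotropy action of the stabilizer $K_p$ on $T_pX$. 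This action is faithful because $G$ is center-free, and its image lies in $SO(T_pX)$ because $K_p$ is connected (maximal compact subgroups of connected semisimple Lie groups are connected). A nontrivial element of $SO(V)$ cannot have a fixed subspace of codimension exactly $1$, since it would then act as $-1$ on the orthogonal complement, contradicting $\det=+1$. Therefore $T_p\text{Fix}(g)=\ker(dg_p-I)$ has codimension at least $2$, and the same holds for $\text{Fix}(g)=\Min(g)$.

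The main obstacle is the elliptic subcase: Lemma \ref{lem:codim}, being tailored to unions of parallel geodesics, is not directly applicable there, and one must supply the separate orientation-preservation input for the isotropy representation, leaning on connectedness of the maximal compact subgroup together with the center-free hypothesis for faithfulness.
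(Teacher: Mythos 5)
Your proposal is correct and follows essentially the same route as the paper: reduce to the simple case via the product decomposition, handle the elliptic case by orientation preservation of the isotropy representation, and handle the hyperbolic case by $\Min(g)\subset\mathfrak{I}(c)$ together with Lemma~\ref{lem:codim}. You fill in one small detail the paper leaves implicit — ruling out $\mathfrak{I}(c)=X$ via the de~Rham $\BR$-splitting that a parallel field would force — but the overall argument is the same.
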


\begin{proof}
If $G$ has more than one factor, $\Min(g)\cong\prod\Min(g_i)$ where $g_i$ are the components of $g$ so the conclusion is trivial since the co-dimension is at least the number of factors. Assume therefore that $G$ is simple.
Since $G$ is connected, every $g\in G$ preserves the orientation of $X$, hence $\text{Fix}(g)$ is of codimension $\ge 2$ for every elliptic element $g\in G$.
For $g$ hyperbolic, since $\Min(g)\subset\mathfrak{I}(c)$ where $c$ is an axis of $g$,
the result follows from Lemma \ref{lem:codim}
\end{proof}



\subsection{Defining the Morse function}

Recall the classical Margulis lemma (see \cite[Proposition 4.1.16]{Th}):

\begin{lem}\label{lem:margulis}
There are two constant $\gep_G>0$ and $m_G\in\BN$ such that if $\gL<G$ is a discrete group generated by $\{\gc\in\gL:d_\gc(x)\le\gep_G\}$ for some $x\in X$, then $\gL$ contains a normal subgroup of index $\le m_G$ which is contained in a connected nilpotent subgroup of $G$.
\end{lem}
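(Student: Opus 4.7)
The plan is to reduce the small-displacement hypothesis to the classical \emph{Zassenhaus lemma} via a covering argument on the compact stabilizer $K_x := \{g\in G : g\cdot x = x\}$, which is conjugate to $K$. First I would invoke the Zassenhaus lemma: there exists a neighborhood $U$ of $e$ in $G$, depending only on $G$, such that for every discrete subgroup $\gL\le G$ the subset $\gL\cap U$ generates a subgroup contained in a connected nilpotent analytic subgroup $N_0$ of $G$. The standard proof uses the Baker--Campbell--Hausdorff formula: iterated commutators of elements sufficiently close to $e$ contract back toward $e$, so discreteness forces the lower central series of $\langle\gL\cap U\rangle$ to terminate at a uniformly bounded step.

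Next I would observe that the displacement condition $d_g(x)\le\gep$ is a condition of nearness to $K_x$, not to $e$: for small $\gep$ the sublevel set $S_\gep := \{g:d_g(x)\le\gep\}$ has the form $K_x\cdot W_\gep$ where $W_\gep$ is a ball of radius $O(\gep)$ about $e$. I would fix a symmetric open neighborhood $W$ of $e$ with $W^{-1}W\subset U$ and, using the fact that all $K_x$ are conjugate to $K$, cover $K_x$ by $m_G$ translates $k_1W,\ldots,k_{m_G}W$ with $m_G$ depending only on $G$. Shrinking $\gep_G$ so that $S_{\gep_G}\subset\bigcup_i k_iW$, I would label each $\gc\in T := \gL\cap S_{\gep_G}$ by $\phi(\gc)\in\{1,\ldots,m_G\}$ with $\gc\in k_{\phi(\gc)}W$. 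If $\phi(\gc_1)=\phi(\gc_2)$ then $\gc_1^{-1}\gc_2\in W^{-1}W\subset U$; picking one representative $\gc_i\in T$ per occurring label yields $T\subset\bigcup_i\gc_i\cdot(\gL\cap U)$, so $\gL=\langle T\rangle$ is generated by at most $m_G$ elements $\{\gc_i\}$ together with $\gL\cap U$, and $\langle\gL\cap U\rangle\subseteq N_0$ is nilpotent by Zassenhaus.

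The main obstacle is upgrading this decomposition into a \emph{normal} nilpotent subgroup of index at most $m_G$ (possibly after enlarging the constant). The key point is that the Zariski closure of $\langle\gL\cap U\rangle$ in $G$ is a canonical connected nilpotent subgroup attached to $\gL$, so conjugation by any element of $\gL$ permutes this closure-datum among finitely many possibilities; an intersection-of-conjugates argument then produces a characteristic nilpotent subgroup of $\gL$ of finite index bounded in terms of $m_G$. The subtle technical step is showing that the $\le m_G$ coset representatives $\gc_i$ generate only a \emph{finite} quotient modulo the nilpotent hull---this uses rigidity of nilpotent analytic subgroups inside a semisimple Lie group, together with a pigeonhole argument on the powers $\gc_i^n$ whose displacements grow at most linearly with $n$, forcing eventual return to the Zassenhaus regime.
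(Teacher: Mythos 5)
The paper does not actually prove this lemma; it simply cites Thurston \cite{Th} (Proposition 4.1.16), so your task is to reconstruct a classical argument, and your Zassenhaus\,$+$\,covering setup is indeed the standard route, correctly carried out through the pigeonhole step: choose a Zassenhaus neighborhood $U$, cover a neighborhood of the stabilizer $K_x$ by $m_G$ translates of a small symmetric $W$ with $W^{-1}W\subset U$, and deduce that the short elements $T=\gL\cap S_{\gep_G}$ fall into at most $m_G$ left cosets of $\langle\gL\cap U\rangle$. Up to that point the argument is fine.

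The gap is in the final paragraph, and it is the genuinely hard part of the lemma. From ``$\gL=\langle T\rangle$ is generated by at most $m_G$ coset representatives together with $\gL\cap U$'' you cannot conclude that $\langle\gL\cap U\rangle$ has bounded (or even finite) index, nor that any nilpotent subgroup does: a group generated by finitely many elements plus a nilpotent subgroup need not be virtually nilpotent at all (the free group $F_2$ is generated by two elements plus the trivial subgroup). Your three suggested fixes do not close this. First, the Zariski closure of $\langle\gL\cap U\rangle$ is not a canonical invariant of $\gL$ --- it depends on the choice of $U$, and there is no a priori reason that $\gL$-conjugation moves it through only finitely many subgroups. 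Second, the assertion that an ``intersection of conjugates'' argument yields bounded index presupposes exactly the finiteness you are trying to establish; passing to a core only preserves a bound on the index once you already have one. Third, the claim that the powers $\gc_i^n$ ``return to the Zassenhaus regime'' because their displacements grow at most linearly is backwards: linear growth of $d_{\gc_i^n}(x)$ means the powers \emph{leave} the region $\{d_g(x)\le\gep_G\}$, and for hyperbolic or parabolic $\gc_i$ they never come back, so no pigeonhole on powers is available. The correct completion (in Thurston, or in Kazhdan--Margulis, or in Ballmann--Gromov--Schroeder) requires a further idea --- typically an induction on the dimension of the ambient nilpotent hull, or a careful analysis showing that any element of $T$ outside the hull would produce, via iterated commutators with elements of $\gL\cap U$, a nontrivial element of $\gL$ in an arbitrarily small neighborhood of the identity, contradicting discreteness --- and this is precisely what your sketch omits.
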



Set
$$
 \gep=\frac{\gep_G}{2}~~~~~\text{and}~~~~~m=m_G!
$$
We will also use the following:

\begin{lem}
There is an integer $\nu=\nu(G)$ such that for every ascending sequence of length $\nu$ of centralizers of elements $g_1,\ldots,g_\nu\in G$ 
$$
 C_G(g_1)\subset C_G(g_2)\subset\cdots\subset C_G(g_\nu),
$$
for some $i<\nu$ we have $C_G(g_i)=C_G(g_{i+1})$.
\end{lem}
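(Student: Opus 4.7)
The plan is to exploit the real algebraic structure on $G$: realising $G$ (via, say, its adjoint representation on $\mathfrak{g}$) as an open subgroup of the real points of an algebraic group, each centralizer $C_G(g) = \{h \in G : [h,g]=1\}$ becomes a Zariski closed subgroup. The length of a strict ascending chain of Zariski closed subgroups of $G$ is then controlled by two invariants, the dimension of the identity component and the order of the component group, and the task is to bound both uniformly.

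First I would analyse a strict containment $C_G(g_i) \subsetneq C_G(g_{i+1})$. Either the identity components differ, so $\dim C_G(g_i) < \dim C_G(g_{i+1})$ (a \emph{dimension jump}); or the identity components coincide, so $[C_G(g_{i+1}) : C_G(g_i)] \ge 2$ and both groups sit as distinct finite extensions of the common identity component $H^{\circ}$ inside $N_G(H^{\circ})$ (a \emph{component jump}). Dimension jumps can occur at most $\dim G$ times along the chain. Between two consecutive dimension jumps, the indices $[C_G(g_j) : H^{\circ}]$ form a strictly increasing sequence of positive integers, so the number of component jumps inside that stretch is at most $\log_2 N$, where $N = N(G)$ is a uniform upper bound on $|C_G(g)/C_G(g)^{\circ}|$ valid for every $g \in G$.

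The main obstacle is establishing the uniform bound $N(G)$. This is a classical fact from the structure theory of real algebraic groups. Using the Jordan decomposition $g = g_s g_u$ with $g_s$ semisimple, $g_u$ unipotent, and $[g_s, g_u] = 1$, one has $C_G(g) = C_{C_G(g_s)}(g_u)$. The component group of $C_G(g_s)$ embeds into the Weyl group of any maximal torus of $G$ containing $g_s$, and is therefore bounded by $|W(G,T)|$, a quantity depending only on $G$. The component group of the centralizer of a unipotent element in a connected reductive group is likewise uniformly bounded (trivial in the simply connected case, and in general controlled by the fundamental group of the derived subgroup). Multiplying the two bounds gives the desired $N(G)$.

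Combining these estimates, every strict ascending chain of centralizers in $G$ has length at most an explicit function of $\dim G$ and $N(G)$, say at most $\dim G + (\dim G + 1)\log_2 N$, and adding one furnishes the required $\nu = \nu(G)$.
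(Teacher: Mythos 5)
Your overall strategy coincides with the paper's in its endgame: once one has a uniform bound on the dimension of centralizers (trivially $\dim G$) and a uniform bound on the number of connected components, a strict ascending chain of centralizers can be bounded because at each strict inclusion either the dimension jumps or the component group grows. The essential difference lies in how you establish the uniform bound $N(G)$ on the number of components. The paper does this via Bezout's theorem: $C_G(g)$ is cut out inside a fixed affine variety $G \subset \GL_n$ by the polynomials defining $G$ together with $n^2$ quadratic equations of degree independent of $g$, so the number of irreducible components (equal to the number of connected components, since $C_G(g)$ is a group) of the resulting variety is bounded uniformly. You instead invoke the Jordan decomposition $g=g_sg_u$ and classical structure theory: the component group of the centralizer of a semisimple element is controlled by the Weyl group, and the component group of a unipotent centralizer in a reductive group is uniformly bounded. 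Both routes do establish the needed bound, and your chain-counting is if anything slightly tighter (you use the Lagrange doubling to get $\log_2 N$ component jumps per stretch, while the paper only uses that the count strictly increases). The Bezout argument is, however, considerably slicker: it is a one-line degree estimate that sidesteps all structure theory and works at once over $\R$ or $\C$, for mixed (neither semisimple nor unipotent) elements, and without any case analysis.

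One factual misstep worth flagging: your parenthetical claim that the component group of a unipotent centralizer is \emph{trivial in the simply connected case} is false. For instance in the simply connected (and adjoint) group of type $F_4$ there is a unipotent class whose centralizer has component group $S_4$; similar phenomena occur in types $E_6, E_7, E_8, G_2$. What is true, and what your argument actually needs, is that these component groups are uniformly bounded in terms of $G$ (they are classified, and are all isomorphic to subquotients of small symmetric groups or products of $\Z/2\Z$'s). Since you only use the existence of the bound $N(G)$ and not the incorrect ``trivial'' claim, this does not break the proof, but it should be corrected. A second, more minor point: over $\R$ the component group of $C_G(g_s)(\R)$ is controlled not just by the Weyl group but also by Galois cohomology of the identity component; this again only affects the constants, not the existence of a uniform bound, but the Bezout route makes the issue invisible.
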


\begin{proof}
Considering $G$ as an algebraic subgroup of $\GL_n(\BC)$, the centralizer 
$$
 C_G(g)=\{h:hgh^{-1}=g\}\cap G
$$ 
is defined by $n^2$ quadratic polynomials and the polynomials defining $G$. Since $C_G(g)$ is a group, its irreducible components are its connected components.
Hence, by Bezout's theorem, the number of connected components of $C_G(g)$ is bounded uniformly (independently of $g$). Now if $C_G(g_i)$ and $C_G(g_{i+1})$ have the same dimension and the same number of components, they must coincide, and the lemma is proved since $G$ is finite dimensional.
\end{proof} 

Thus for any semisimple $g\in G$ there is $0\le j< \nu$ such that $C_G(g^{m^j})=C_G(g^{m^{j+1}})$. Denote by $j(g)$ the minimal such $j$.
Set 
$$
\mu=m^\nu.
$$
Let $\gC$ be an irreducible lattice in $G$. Denote by $M=\gC\backslash X$ the corresponding orbifold, and by $\pi: X\to M$ the associated (ramified) covering map. For a subset $Y\subset M$ let $\ti Y=\pi^{-1}(Y)$ be its preimage in $X=\ti M$. 
For $\gc\in\gC\setminus\{ 1\}$ let 
$$
 \overline{\gc}=\gc^{m^{j(\gc)}}~\text{if}~\gc~\text{is hyperbolic, and}~\overline{\gc}=\gc~\text{otherwise}.
$$ 
Obviously $d_{\overline{\gc}}(x)\le\mu d_\gc(x)$.
Since for a hyperbolic element $\Min(g)$ is determined by $C_G(g)$ we have 
\begin{equation}\label{eq:min=min}
\Min(\overline{\gc})=\Min(\overline{\gc}^m),~\text{for every hyperbolic element}~\gc\in\gC\setminus\{1\}.
\end{equation}
Additionally, for $\gc_1,\gc_2$ hyperbolic $[\overline{\gc_1}^m,\overline{\gc_2}^m]=1\implies [\overline{\gc_1},\overline{\gc_2}]=1$.

Note that for every $\gc_0\in\gC$, the set $\{\gc\in\gC:\overline{\gc}=\gc_0\}$ is finite, being compact and discrete. 
Let
$$
 \ti N:=\cup\{ \min(\overline{\gc}):\gc\in\gC\setminus\{1\},~\inf d_{\overline{\gc}}<\gep\}.
$$
Clearly $\ti N$ is a $\gC$-invariant union of totally geodesic submanifolds, and since $\gC$ is discrete, this union is locally finite. 
By Corollary \ref{cor:codim}, unless $G\cong\PSL_2(\BR)$, $\text{codim}_X\ti N\ge 2$, implying that $X\setminus\ti N$ is connected. 
$N=\pi(\ti N)\subset M$ is a singular submanifold consisting of all ramified points and closed geodesics shorter than $\gep$. 

Let $f:\BR^{>0}\to\BR^{\ge 0}$ be a smooth function which tends to $\infty$ at $0$, strictly decreases on $(0,\gep]$ and is identically $0$ on $[\gep,\infty)$. For $\gc\in\gC\setminus\{ 1\}$ and $x\in X\setminus\ti N$ define 
\[
\begin{array}{lll}
\phi_\gc(x)=&f(d_\gc(x))&~\text{if}~\gc~\text{is parabolic}\\
              &f(\mu d_\gc(x))&~\text{if}~\gc~\text{is elliptic}\\
              &f(d_\gc(x)-\inf d_\gc)&~\text{if}~\gc~\text{is hyperbolic and}~\inf d_\gc<\gep\\
              &0&~\text{if}~\gc~\text{is hyperbolic and}~\inf d_\gc\ge\gep,
\end{array}
\]
and set
$$
 \ti\psi(x):=\sum_{\gc\in\gC\setminus\{ 1\}}\phi_{\overline{\gc}}(x).
$$
There are only finitely many nonzero summends for each $x$, since $\phi_{\overline{\gc}}(x)\ne 0\implies d_{\overline{\gc}}(x)\le 2\gep$, and $\gC$ is discrete.
Thus $\ti\psi$ is a well defined smooth function on $X\setminus \ti N$. Note that $\ti\psi(x)$ tends to $\infty$ as $x$ approaches $\ti N$. Since $\ti\psi$ is $\gC$-invariant it induces a function on $M\setminus N$ 
$$
 \psi(x)=\ti\psi(\pi^{-1}(x)).
$$

For $\gd\ge 0$ denote 
$$
 \ti\psi_{\le\gd}=\{ x\in X:\ti\psi(x)\le\gd\}~~\text{and}~~\psi_{\le\gd}=\{ x\in m:\psi(x)\le\gd\}=\pi(\ti\psi_{\le\gd}).
$$
 Then $d_\gc(x)\ge\frac{\gep}{\mu}$ for every $x\in \ti\psi_{\le 0}$ and $\gc\in \gC\setminus\{ 1\}$. Thus, the injectivity radius in $M$ at any point of $\psi_{\le 0}$ is at least $\frac{\gep}{2\mu}$.
 
 
\subsection{The main proposition}

The proof of Theorem \ref{thm:d(Gamma)} relies on the existence of a deformation retract from $M\setminus N$ into an arbitrarily small neighborhood of $\psi_{\le 0}$. The main part consists in showing that the gradient of $\psi$ does not vanish outside $\psi_{\le 0}$ and hence defines a smooth vector field.

\begin{prop}\label{prop:gradient}
For $x\in M\setminus N$, $\nabla \psi(x)=0$ iff $\psi(x)=0$.
\end{prop}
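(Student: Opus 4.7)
The plan is to lift to the universal cover and work at a preimage $\tilde x\in X\setminus\tilde N$ of $x$; since $x\notin N$ the covering map $\pi$ is a local diffeomorphism at $\tilde x$, so it is equivalent to prove $\nabla\tilde\psi(\tilde x)\neq0$ whenever $\tilde\psi(\tilde x)>0$. The function $f$ being smooth on $(0,\infty)$, strictly decreasing on $(0,\gep]$, identically $0$ on $[\gep,\infty)$, and therefore flat to all orders at $\gep$, we have $\nabla\phi_{\overline\gc}(\tilde x)=0$ whenever $\phi_{\overline\gc}(\tilde x)=0$. Hence only the finite set
\[
 S:=\{\gc\in\gC\setminus\{1\}:\phi_{\overline\gc}(\tilde x)>0\}
\]
contributes to $\nabla\tilde\psi(\tilde x)$, and each $\gc\in S$ satisfies $d_{\overline\gc}(\tilde x)<2\gep=\gep_G$ together with
\[
 \nabla\phi_{\overline\gc}(\tilde x)=-c_\gc\,\nabla d_{\overline\gc}(\tilde x),\qquad c_\gc>0.
\]
It therefore suffices to produce a single tangent vector $v\in T_{\tilde x}X$ along which every $d_{\overline\gc}$, $\gc\in S$, is strictly decreasing, since this yields $\langle\nabla\tilde\psi(\tilde x),v\rangle>0$.

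I construct $v$ using the nilpotent structure supplied by the Margulis lemma. The subgroup $\gL:=\langle\overline\gc:\gc\in S\rangle\le\gC$ is discrete and generated by elements of displacement less than $\gep_G$ at $\tilde x$, so Lemma \ref{lem:margulis} gives a normal subgroup $\gL_0\triangleleft\gL$ of index at most $m_G$ inside a connected nilpotent Lie subgroup $U\le G$. Since $m=m_G!$, the $m$-th powers $\overline\gc^m$ all lie in $\gL_0\subset U$; by \eqref{eq:min=min} one has $\Min(\overline\gc)=\Min(\overline\gc^m)$ in the hyperbolic case, together with $\text{Fix}(\gc)\subseteq\text{Fix}(\gc^m)$ in the elliptic case, so the displacement geometry at $\tilde x$ is controlled by elements of $U$. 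Jordan-decomposing $\overline\gc^m=s_\gc u_\gc$ inside $U$, the semisimple parts $s_\gc$ are central in $U$, and hence the commuting family $\{s_\gc\}$ admits a non-empty common $\Min$-set $F\subset X$: inductively, each $s_{\gc'}$ preserves $\Min(s_\gc)$ and restricts there to a semisimple isometry. The unipotent parts $u_\gc$ then preserve $F$ and share a common ideal fixed point $\xi\in\partial F$.

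I would take $v$ to be the infinitesimal direction at $\tilde x$ of $-\nabla\bigl(d(\cdot,F)+b_\xi\bigr)$, where $b_\xi$ is the Busemann function at $\xi$, thereby combining projection to $F$ (which decreases $d_{s_\gc}$, and hence $d_{\overline\gc}$ for hyperbolic $\gc$ via $\Min(\overline\gc)=\Min(\overline\gc^m)$) with horospherical flow toward $\xi$ (which decreases the displacement of the parabolic and unipotent contributions). The main obstacle is precisely the verification that this single $v$ strictly decreases $d_{\overline\gc}$ for every $\gc\in S$ simultaneously — especially for the elliptic generators, where $\text{Fix}(\gc)$ can be strictly smaller than $\text{Fix}(\gc^m)$ so projection to $F$ does not a priori decrease $d_\gc$ itself, and for the parabolic ones, whose displacement has no minimum but decreases along horospheres at $\xi$. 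The hyperbolic, torsion-free case essentially follows \cite{Ge}; the genuinely new difficulty is the simultaneous presence of elliptic and parabolic generators, for which the compatibility of the two summands in $v$ must be read off the central/unipotent splitting in the nilpotent group $U$.
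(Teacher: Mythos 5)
Your reduction to the universal cover, the identification of the finite contributing set $S=\gS_x$, and the invocation of the Margulis lemma to place the $m$-th powers inside a connected nilpotent subgroup all match the paper's setup, but from there the argument diverges, and the step you yourself flag as ``the main obstacle'' is a genuine gap rather than a technicality. You propose a single unified direction $v=-\nabla\bigl(d(\cdot,F)+b_\xi\bigr)$ and never verify that it simultaneously and strictly decreases $d_{\overline\gc}$ for every $\gc\in S$. There is no a priori reason it should: the gradients of $d(\cdot,F)$ and of $b_\xi$ are both unit vectors and can cancel; moving toward $F$ need not decrease the displacement of an elliptic $\gc$ when $\text{Fix}(\gc)\subsetneq\text{Fix}(\gc^m)$ (exactly the problem you name); and flowing toward $\xi$ need not decrease the displacement of a hyperbolic $s_\gc$. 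A second, unflagged gap: your set $F=\cap\Min(s_\gc)$ is invariant under $\gL_0$ (the central semisimple parts commute with $U$), but not obviously invariant under the full group $\gL$, and it is precisely $\gL$-invariance of the auxiliary closed convex set that is needed to conclude nondecrease of every $d_\gs$ along a ray from the projection.

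The paper's proof avoids all of this by splitting into three cases and choosing in each a \emph{different} closed convex $\gC_x$-invariant set $\mathcal{C}$: a common fixed point when $\gC_x$ is finite; $\cap_{\gc\in\mathcal{N}_x}\Min(\gc)$ when $\gC_x$ is infinite and semisimple (normality of $\mathcal{N}_x$ supplies the $\gC_x$-invariance); and a finite intersection of sublevel sets $\{d_\gc\le\gt\}$ over the $\gC_x$-conjugacy class of a central parabolic $\gc_0$ otherwise. In each case Lemma~\ref{lem:ray} applied to the ray from $P_\mathcal{C}(x)$ to $x$ gives nondecrease of every $d_\gs$, $\gs\in\gS_x$, and this is the workhorse your sketch never invokes; it replaces the hoped-for Busemann-plus-projection compatibility. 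Two further ingredients of the paper's proof are absent from yours: the arithmetic argument (via the centralizer-chain lemma and the bookkeeping $\mu=m^\nu$, together with the asymmetric definition of $\phi_{\overline\gc}$ which uses $f(\mu d_\gc(x))$ in the elliptic case) showing that in the infinite-semisimple case $\gS_x$ must contain a hyperbolic element, which is what forces $x\notin\mathcal{C}\subset\ti N$; and, in the parabolic case, strict decrease of some $d_\gs$ is obtained by contradiction via analyticity (if all $d_\gs$ were stationary along the ray, the $\gC_x$-translates of the ray would be pairwise parallel, impossible since $\gc_0$ is parabolic), not by producing an explicit decreasing direction. Without these mechanisms the proposal does not close.
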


It is more convenient to work in the universal cover where the proposition translates to: for $x\in X\setminus\ti N$, $\nabla \ti\psi(x)=0$ iff $\ti\psi(x)=0$.
We will make use of the following two lemmas:

\begin{lem}\label{lem:ray}
Let $\mathcal{C}\subset X$ be a closed convex set and let $g$ be an isometry of $X$ which preserves $\mathcal{C}$. Let $c:[0,\infty)\to X$ be a 
geodesic ray satisfying 
$$
 c(0)=c([0,\infty))\cap \mathcal{C}=P_\mathcal{C}(c(1)),
$$ 
where $P_\mathcal{C}:X\to\mathcal{C}$ denotes the projection to the nearest point.
Then $h(t):=d_g(c(t))$ is a nondecreasing smooth convex function on $(0,\infty)$. In particular, if $h(t_0)>h(0)$ then $h'(t_0)>0$.
\end{lem}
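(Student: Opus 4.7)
The plan is to combine two standard features of CAT(0) geometry: the convexity of the displacement function $d_g$ on $X$, and the fact that the nearest-point projection onto a closed convex subset is $1$-Lipschitz.

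Since $X$ has nonpositive curvature, $d_g:X\to\BR$ is convex, and since $c$ is an affinely parametrized geodesic, the composition $h(t)=d_g(c(t))$ is convex on $[0,\infty)$. Moreover $d_g$ is smooth on $X\setminus\text{Fix}(g)=\{d_g>0\}$ (as a smooth distance function on a manifold without conjugate points), so smoothness of $h$ on $(0,\infty)$ will drop out of the monotonicity step once we know that $c(t)$ stays away from the fixed set of $g$ for $t>0$.

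The main step is the inequality $h(t)\ge h(0)$ for every $t\ge 0$. The hypothesis $c(0)=P_\mathcal{C}(c(1))$ expresses that $c$ leaves $\mathcal{C}$ perpendicularly at $c(0)$ in the CAT(0) angle sense: the obtuse-angle characterization of projection says $\angle_{c(0)}(c(1),z)\ge\pi/2$ for every $z\in\mathcal{C}$, and since $c(t)$ lies on the geodesic through $c(0)$ in the direction of $c(1)$, the same angle inequality $\angle_{c(0)}(c(t),z)\ge\pi/2$ holds, so $P_\mathcal{C}(c(t))=c(0)$ for every $t\ge 0$. Applying the isometry $g$, which preserves $\mathcal{C}$, to this configuration gives $P_\mathcal{C}(gc(t))=gc(0)$. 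Since $P_\mathcal{C}$ is $1$-Lipschitz in a CAT(0) space,
\[
h(t)=d(c(t),gc(t))\;\ge\;d(P_\mathcal{C}(c(t)),P_\mathcal{C}(gc(t)))\;=\;d(c(0),gc(0))\;=\;h(0).
\]

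Combined with convexity, the bound $h(t)\ge h(0)$ forces the right derivative $h'(0^+)\ge 0$; otherwise $h$ would strictly dip below $h(0)$ immediately past $0$. Since convexity makes $h'$ nondecreasing on $(0,\infty)$, we obtain $h'\ge 0$ everywhere, hence $h$ is nondecreasing. The ``in particular'' assertion is then a direct consequence: if $h'(t_0)=0$, monotonicity of $h'$ forces $h'\equiv 0$ on $[0,t_0]$, so $h(t_0)=h(0)$, contradicting $h(t_0)>h(0)$. The only nonformal input is the projection identity $P_\mathcal{C}(c(t))=c(0)$, which is exactly where the ``ray'' (rather than ``segment'') hypothesis on $c$ is used; everything else is routine one-variable convex analysis.
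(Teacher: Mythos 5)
Your proof is correct and follows essentially the same route as the paper: convexity of $d_g$ gives convexity of $h$, a projection argument gives $h(t)\ge h(0)$, and one-variable convexity then yields the monotonicity and the strict-derivative conclusion. The paper encapsulates the projection step in a preliminary Claim ($d_g(x)\ge d_g(P_{\mathcal C}(x))$ for $\mathcal C$ closed, convex, $g$-invariant) and then silently uses $P_{\mathcal C}(c(t))=c(0)$; you make that last identity explicit via the obtuse-angle characterization of nearest-point projection in CAT(0) spaces, which is a worthwhile clarification, since the hypothesis $c(0)=c([0,\infty))\cap\mathcal C$ alone would not suffice and the extra condition $c(0)=P_{\mathcal C}(c(1))$ is genuinely needed there.
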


\begin{lem}\label{lem:abelian-intersect}
$(i)$ The intersection of a finite collection of nonempty sub-level sets corresponding to the displacement functions of commuting isometrics of $X$ is nonempty.

$(ii)$ If $A\leq G$ is an abelian subgroup consisting of semisimple elements. Then
$$
 \cap_{g\in A}\Min (g)\ne\emptyset.
$$
\end{lem}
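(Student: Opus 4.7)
The plan is to handle (i) by induction on the number of isometries using nearest-point projections, and then bootstrap (ii) from (i) by a dimension-minimality argument.

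For (i), I would argue by induction on the number $n$ of isometries, with the base case $n=1$ immediate. For the inductive step, let $g_1,\dots,g_n$ be pairwise commuting isometries with nonempty sublevel sets $L_i := \{d_{g_i}\le t_i\}$. Each $L_i$ is closed and convex since $d_{g_i}$ is a continuous convex function on the nonpositively curved $X$, so by induction $C := L_1\cap\cdots\cap L_{n-1}$ is a nonempty closed convex subset. The commutation $g_n g_i = g_i g_n$ gives
\[
d_{g_i}(g_n x) \;=\; d(g_n x,\, g_n g_i x) \;=\; d(x,\, g_i x) \;=\; d_{g_i}(x),
\]
so $g_n$ preserves each $L_i$, and hence $C$. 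In a CAT(0) space, the nearest-point projection $P_C : X \to C$ is $1$-Lipschitz, and uniqueness of the closest point yields $P_C\circ g_n = g_n\circ P_C$. Given any $x\in L_n$, the point $y := P_C(x)\in C$ then satisfies
\[
d_{g_n}(y) \;=\; d\bigl(P_C(x),\, P_C(g_n x)\bigr) \;\le\; d(x, g_n x) \;\le\; t_n,
\]
so $y\in C\cap L_n$, completing the induction.

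For (ii), since every $g\in A$ is semisimple, $\Min(g) = \{d_g\le \inf d_g\}$ is a nonempty sublevel set. Applying (i) to any finite $F\subset A$ yields
\[
Y_F \;:=\; \bigcap_{g\in F}\Min(g) \;\neq\; \emptyset.
\]
Each $Y_F$ is a finite intersection of complete totally geodesic submanifolds in the simply connected nonpositively curved space $X$, and is therefore itself a closed totally geodesic submanifold (at any base point, the exponential map is a diffeomorphism and identifies each $\Min(g)$ through that point with a linear subspace of the tangent space, so the intersection corresponds to an intersection of linear subspaces). Being convex, $Y_F$ is also connected. Pick $F_0$ with $\dim Y_{F_0}$ minimal. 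For arbitrary $g'\in A$, the submanifold $Y_{F_0}\cap\Min(g') = Y_{F_0\cup\{g'\}}$ is a nonempty closed totally geodesic submanifold of the connected manifold $Y_{F_0}$; minimality of $\dim Y_{F_0}$ forces their dimensions to agree, and then connectedness of $Y_{F_0}$ forces $Y_{F_0}\cap\Min(g') = Y_{F_0}$. Hence $Y_{F_0}\subset \Min(g')$ for every $g'\in A$, so $Y_{F_0}\subset \bigcap_{g\in A}\Min(g)$.

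The main technical ingredient is the equivariance $P_C\circ g = g\circ P_C$ for isometries preserving a closed convex set $C$, a standard CAT(0) fact; once this is available, part (i) is a routine induction, and part (ii) reduces the possibly infinite intersection to a finite one by a pigeonhole argument on the (integer) dimensions of the totally geodesic intersections $Y_F$.
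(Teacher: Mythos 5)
Your proof is correct and follows essentially the same route as the paper: part (i) by induction using the 1-Lipschitz, equivariant nearest-point projection onto the convex intersection of the first $n-1$ sublevel sets (the paper isolates this as a separate claim, Claim~\ref{clm:1.8}), and part (ii) by reducing the infinite intersection to a finite one. The only difference is one of exposition: the paper dispatches part (ii) in a single sentence invoking the finite intersection property of the complete totally geodesic submanifolds $\mathcal{F}(F)$, whereas you spell out the underlying mechanism (pick $Y_{F_0}$ of minimal dimension; by part (i) and the local linearity of totally geodesic submanifolds in normal coordinates, $Y_{F_0}\cap\Min(g')$ is an open, closed, nonempty subset of the connected $Y_{F_0}$, hence all of it). Since the $\mathcal{F}(F)$ need not be compact, the ``finite intersection property'' phrasing in the paper really does rest on exactly this dimension-minimality argument, so your more explicit version is a faithful unpacking rather than a different proof.
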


The proofs of Lemmas \ref{lem:ray} and \ref{lem:abelian-intersect} rely on the following simple:

\begin{clm}\label{clm:1.8}
Let $g$ be an isometry of $X$, let $\mathcal{C}\subset X$ be a closed convex $g$-invariant set and let $x\in X$. Then $d_g(x)\ge d_g(P_\mathcal{C}(x))$.
\end{clm}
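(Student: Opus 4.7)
The plan is to deduce the claim from two standard features of the CAT(0) geometry of $X$: (a) the nearest-point projection onto a closed convex subset is 1-Lipschitz, and (b) an isometry stabilizing a closed convex subset commutes with the projection onto it. Since $X=G/K$ is a nonpositively curved Riemannian symmetric space, both properties apply, and together they immediately force the displacement function to decrease under $P_\mathcal{C}$.

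First I would verify that $P_\mathcal{C}\circ g = g\circ P_\mathcal{C}$. Fix $x\in X$ and set $y=P_\mathcal{C}(x)$. Since $\mathcal{C}$ is $g$-invariant, $gy\in\mathcal{C}$. For any $z\in\mathcal{C}$, the point $g^{-1}z$ also lies in $\mathcal{C}$, so by the defining property of $y$ we have $d(x,g^{-1}z)\ge d(x,y)$; applying the isometry $g$ gives $d(gx,z)\ge d(gx,gy)$. Uniqueness of the nearest point on a closed convex subset of a CAT(0) space then yields $P_\mathcal{C}(gx)=gy=g(P_\mathcal{C}(x))$.

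Next I would invoke the fact that $P_\mathcal{C}\colon X\to\mathcal{C}$ is 1-Lipschitz, which is a direct consequence of the convexity of the distance function on $X$ (this is already highlighted in the paper's background subsection). Combining the two observations yields
$$
 d_g(P_\mathcal{C}(x))=d\bigl(P_\mathcal{C}(x),g\,P_\mathcal{C}(x)\bigr)=d\bigl(P_\mathcal{C}(x),P_\mathcal{C}(gx)\bigr)\le d(x,gx)=d_g(x),
$$
which is the required inequality.

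There is really no serious obstacle here: both ingredients are textbook facts about CAT(0) spaces, and the only small verification is the commutation $P_\mathcal{C}\circ g = g\circ P_\mathcal{C}$, which is where the $g$-invariance of $\mathcal{C}$ is used. Consequently the claim is very short, which is appropriate given that it serves as the common technical kernel behind Lemmas \ref{lem:ray} and \ref{lem:abelian-intersect}.
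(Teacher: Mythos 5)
Your proof is correct and matches the paper's argument exactly: both rest on the equivariance $P_\mathcal{C}\circ g=g\circ P_\mathcal{C}$ (coming from $g$-invariance of $\mathcal{C}$) together with the $1$-Lipschitz property of the nearest-point projection. The only difference is that you spell out the uniqueness argument for the commutation identity, which the paper states without proof.
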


\begin{proof}[Proof of \ref{clm:1.8}]
Since $\mathcal{C}$ is $g$-invariant, $P_\mathcal{C}(g\cdot x)=g\cdot P_\mathcal{C}(x)$. Since $P_\mathcal{C}$ is $1$-Lipschitz,
$$
 d_g(x)=d(x,g\cdot x)\ge d(P_\mathcal{C}(x),P_\mathcal{C}(g\cdot x))=d(P_\mathcal{C}(x),g\cdot P_\mathcal{C}(x))=d_g(P_\mathcal{C}(x)).
$$
\end{proof}

\begin{proof}[Proof of Lemma \ref{lem:ray}]
Since $X$ has nonpositive curvature, the distance between the geodesic lines $c(t)$ and $g\cdot c(t)$, $h(t)=d(c(t),g\cdot c(t))$ is convex. By \ref{clm:1.8} $h(t)\ge h(0)$ for any $t>0$. Indeed, 
$$
 h(t)=d_g(c(t))\ge d_g(P_\mathcal{C}(c(t))=h(0).
$$
Thus $h'(0)\ge 0$, and as $h$ is convex, $h'(t)\ge 0$ for every $t\ge 0$. 

If $h(t_0)>0$ for some $t_0$ then, by Lagrange's mean value theorem, $h'(t)>0$ for some $t<t_0$ and since $h'$ is monotonic, $h'(t_0)>0$.
\end{proof}

\begin{proof}[Proof of Lemma \ref{lem:abelian-intersect}]
Note that if $g_1$ and $g_2$ commute, the function $d_{g_2}$, and hence its sublevel sets, are $g_1$-invariant. 
Indeed
$$
 d_{g_2}(g_1\cdot x)=d(g_1\cdot x,g_2g_1\cdot x)=d(g_1\cdot x,g_1g_2\cdot x)=d(x,g_2\cdot x)=d_{g_2}(x).
$$
Suppose that we are given $n$ commuting elements $g_1,\ldots, g_n$ and $n$ non-negative numbers $t_1,\ldots,t_n$ such that $\{ d_{g_i}\le t_i\}$ is nonempty for each $i$. By induction on $n$ we may assume that $\mathcal{A}=\cap_{i\ge 2}\{ d_{g_i}\le t_i\}\ne\emptyset$. Then $\mathcal{A}$ is closed convex and $g_1$-invariant. Hence by \ref{clm:1.8}, $d_{g_1}(P_\mathcal{A}(x))\le d_{g_1}(x)$ for every $x$. By choosing $x$ with $d_{g_1}(x)\le t_1$ we deduce that $\{ d_{g_1}\le t_1\}\cap\mathcal{A}\ne\emptyset$, hence Part $(i)$.

Now by Part $(i)$ it follows that $\mathcal{F}(F)=\cap_{g\in F}\Min (g)\ne\emptyset$ for any finite subset $F\subset A$. Since those $\mathcal{F}(F)$ are complete totally geodesic submanifolds of finite dimension with the finite intersection property, they must all intersect in a common point.
\end{proof}

\begin{proof}[Proof of Proposition \ref{prop:gradient}]
For $x\in X$ let 
$$
 \gS_x=\{\overline{\gc}:\gc\in\gC\setminus\{1\}~\text{and}~\phi_{\overline{\gc}}(x)>0\},~\gC_x=\langle\gS_x\rangle
$$ 
and let $\mathcal{N}_x\lhd\gC_x$ be a normal subgroup of index $i_x\le m_G$ in $\gC_x$ which is contained in a connected nilpotent subgroup of $G$ (see Lemma \ref{lem:margulis}). Let $\gS'_x$ be the finite set consists of all elements in $\mathcal{N}_x$ which are expressible as words of length at most $i_x$ in the alphabet $\gS_x$.

Let $x\in X\setminus (\ti N\cup\ti\psi_{\le 0})$. We are going to show that $\nabla \psi(x)\ne 0$.
We distinguish between three cases:

\medskip

\noindent {\bf Case 1:} Suppose first that $\gC_x$ is finite. Then there is a common fixed point $x_0$ for $\gC_x$. Since $x\notin\ti N$, $x\notin\text{Fix}(\gs)$ for $\gs\in\gS_x$, and we may apply Lemma \ref{lem:ray} with $\mathcal{C}=\{ x_0\}$, $c:[0,\infty)\to X$ being the unit speed geodesic ray from $x_0$ through $x$ and $g=\gs$, and conclude that $\frac{d}{dt}d_\gs(c(t))>0$ for any $t>0$. Since $f'<0$ on $(0,\gep)$ this implies
$$
 \nabla\ti\psi(x)\cdot \dot c(t_0)=\Big(\frac{d}{dt}\Big)_{t_0}\ti\psi(c(t))=\sum_{\gs\in\gS_x}\mu f'(\mu d_\gs(x))\Big(\frac{d}{dt}\Big)_{t_0}d_\gs(c(t))<0,
$$
where $t_0=d(x,x_0)$. In particular $\nabla\ti\psi(x)\ne 0$.

\medskip

Before considering the two other cases note that since $\mathcal{N}_x$ is contained in a connected nilpotent group, by Lie's theorem it is conjugated over $\BC$ to a subgroup of the upper triangular matrices, and hence its commutator consists of unipotent, hence parabolic, elements. In particular:

\begin{itemize}
\item[(a)] If $\gC_x$ consists of semisimple elements then $N_x$ is abelian,

and since for nontrivial parabolic $\gc\in\gC_x$, $\gc^{i_x}$ is a nontrival parabolic in $N_x$: 
\item[(b)] if $\gC_x$ contains a nontrivial parabolic element then $N_x$ contains a nontrivial central parabolic element.
\end{itemize}

\medskip

\noindent {\bf Case 2:} Suppose now that $\gC_x$ is infinite and consists of semisimple elements. 
By (a) $\mathcal{N}_x$ is abelian. 

We claim that the set $\gS_x$ contains a hyperbolic element.
Since $\gC_x$ is infinite, also $\mathcal{N}_x$ is infinite, thus its generating set $\gS'_x$ admits an element of infinite order $\gs_0$. Thus if we suppose that $\gS_x$ consists of torsion elements only, then $\gs_0$, being a word of length at most $m$ in $\gS_x$, would satisfy 
$$
 d_{\gs_0}(x)\le m\max\{d_\gc(x):\gs\in\gS_x\}\le m\frac{\gep}{\mu},
$$ 
but then, as $\overline{\gs_0}=\gs_0^{m^{j(\gs_0)}}$ for $j(\gs_0)\le {\nu-1}$ we would get 
$$
 d_{\overline{\gs_0}}(x)\le m^{\nu-1}d_{\gs_0}(x)< m^\nu\frac{\gep}{\mu}=\gep,
$$
which implies that $\overline{\gs_0}\in\gS_x$, a contradiction. 
Let $\gs_0'$ be an hyperbolic element belonging to $\gS_x$, and note that, by definition, $\gs_0'$ is of the form $\overline{\gc}$ for some $\gc\in\gC\setminus\{ 1\}$.

By Lemma \ref{lem:abelian-intersect}$(ii)$ the closed $\gC_x$-invariant convex set 
$$
 \mathcal{C}=\cap\{\Min(\gc):\gc\in \mathcal{N}_x\}
$$ 
is nonempty. Moreover, since $\gs_0'$ is hyperbolic, by Equation (\ref{eq:min=min}), 
$$
 \Min(\gs_0')=\Min(\gs_0'^m),
$$ 
and since $\gs_0'^m\in \mathcal{N}_x$, we derive that $\mathcal{C}\subset\Min(\gs_0')$. Since $\gs_0'\in \gS_x$ we derive that $\mathcal{C}\subset\ti N$ and since $x\notin\ti N$ we derive that $x\notin\mathcal{C}$. Letting $c:[0,\infty)\to X$ be the constant speed geodesic ray with $c(0)=P_\mathcal{C}(x)$ and $c(1)=x$, we deduce from Lemma \ref{lem:ray} that $\nabla d_\gc\cdot\dot c(1)$ is non-negative for every $\gc\in\gC_x$ while $\nabla d_{\gs_0'}\cdot \dot c(1)$ is positive since 
$$
 c(1)=x\notin\ti N\supset\Min(\gs_0')\supset\mathcal{C}\ni c(0).
$$ 
Thus, as in Case 1, the directional derivative of $\ti\psi$ with respect to $\dot c(1)$ is negative, indicating that $\nabla \ti\psi(x)\ne 0$.

\medskip 

\noindent {\bf Case 3:} Finally, suppose that $\gC_x$ contains a nontrivial parabolic element. 
By (b), $\mathcal{N}_x$ contains a nontrivial central parabolic element $\gc_0$. Since $\gc_0$ is central in $\mathcal{N}_x$ and $\mathcal{N}_x$ is of finite index in $\gC_x$, the conjugacy class $C_{\gC_x}(\gc_0)$ of $\gc_0$ in $\gC_x$ is finite. Pick $\gt$ such that $d_{\gc_0}(x)>\gt>\inf (d_{\gc_0})$ and set 
$$
 \mathcal{C}:=\cap_{\gc\in C(\gc_0)}\{ d_\gc\le \gt\}.
$$
Then $\mathcal{C}$ is nonempty by Lemma \ref{lem:abelian-intersect}$(i)$, and $x\notin\mathcal{C}$ by the choice of $\gt$. Clearly $\mathcal{C}$ is closed convex and $\gC_x$ invariant. Thus, setting $\hat n$ to be the unit tangent at $x$ to the ray $c(t)$ coming from $P_\mathcal{C}(x)$, we derive from Lemma \ref{lem:ray} that $\hat n\cdot\nabla d_\gc(x)\ge 0$ for every $\gc\in\gC_x$. We claim that there must be $\gs\in\gS_x$ for which $\hat n\cdot\nabla d_\gs(x)> 0$. Indeed, assuming this is not the case, one derives that for $\gs\in\gS_x$, $d_\gs (c(t))$ is locally constant, and since $X$ is analytic it is globally constant. Hence $\gc\cdot c(t)$ is parallel to $c(t)$ for every $\gc\in\gS_x$, and since $\gS_x$ generates $\gC_x$ and parallelity is a transitive relation on geodesics in $X$, the same holds for every $\gc\in\gC_x$. This however fails to hold for $\gc=\gc_0$. As in the previous cases, we derive that $\nabla\ti\psi(x)\cdot\hat n<0$, and in particular $\nabla\ti\psi(x)\ne 0$.
\end{proof}


\subsection{Consequences of the main proposition}

\begin{thm}\label{thm:retract}
For every $\gd>0$ there is a deformation retract $r_\gd:M\setminus N\to \psi_{\le\gd}$.
\end{thm}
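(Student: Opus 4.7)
The plan is to integrate the negative gradient flow of $\psi$ and stop it once it hits $\psi_{\le\delta}$. By Proposition \ref{prop:gradient}, $\nabla\psi$ is nowhere vanishing on the open set $\{\psi>0\}\subset M\setminus N$, so the smooth vector field
$$
 V:=-\frac{\nabla\psi}{\|\nabla\psi\|^{2}}
$$
is well defined there and satisfies $V\cdot\psi\equiv -1$. Along any integral curve $\eta$ of $V$ the value $\psi(\eta(t))=\psi(\eta(0))-t$ therefore drops at unit speed.

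Next I would argue that $\psi:M\setminus N\to[0,\infty)$ is proper, i.e.\ that every sublevel set $\psi_{\le a}$ is relatively compact in $M\setminus N$. Towards $N$ this is automatic: $f$ has a pole at $0$, and for $x\to N$ at least one summand $\phi_{\overline{\gc}}(x)$ blows up. In the direction of a cusp of a non-uniform lattice, a sequence $x_n$ escaping to infinity in $M$ forces the injectivity radius at $x_n$ to shrink, and Margulis' lemma (\ref{lem:margulis}) produces parabolic elements $\gc_n\in\gC$ with $d_{\gc_n}(x_n)\to 0$, whence $\phi_{\overline{\gc_n}}(x_n)=f(d_{\gc_n}(x_n))\to\infty$. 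Properness guarantees that an integral curve starting at $x$ stays inside the compact set $\{\delta\le\psi\le\psi(x)\}$ and is therefore defined on the whole interval $[0,\psi(x)-\delta]$.

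Given this, set $T(x):=\max(0,\psi(x)-\delta)$, denote the flow of $V$ by $\Phi_{t}$, and put
$$
 r_{\delta}(x):=\Phi_{T(x)}(x),\qquad H(x,s):=\Phi_{sT(x)}(x),\ s\in[0,1].
$$
Then $\psi\circ r_{\delta}=\min(\psi,\delta)$, so $r_{\delta}$ lands in $\psi_{\le\delta}$; on $\psi_{\le\delta}$ the function $T$ vanishes, so $r_{\delta}$ restricts to the identity there and $H$ is stationary on $\psi_{\le\delta}$; and continuity of $H$ follows from joint continuity of the flow and continuity of $T$. This produces the desired (strong) deformation retract.

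I expect the main obstacle to be the properness argument in the non-uniform case, since without some control over the cusps an integral curve could in principle escape to infinity in $M$ in finite time; once properness is in hand, the rest is a routine exercise in integrating a gradient-like vector field.
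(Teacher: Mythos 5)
Your proof is correct and follows essentially the same route as the paper: properness of $\psi$ (its sublevel sets lie in the thick part of $M$, hence are compact, and $\psi\to\infty$ near $N$) together with the absence of positive critical values from Proposition \ref{prop:gradient}, and then the standard deformation retract along the normalized negative gradient flow, a step the paper simply outsources to Milnor's Morse theory (Theorem 3.1). One cosmetic slip: the short elements detected when a sequence escapes to infinity need not be parabolic --- they may be elliptic or hyperbolic with small translation, and then $\phi_{\overline{\gamma}}$ is not $f(d_{\gamma})$ --- but in every case the relevant summand still blows up, so your properness argument is unaffected.
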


\begin{proof}
By Proposition \ref{prop:gradient} the smooth function $\psi(x)$ has no positive critical values. 
We claim furthermore that $\psi:M\setminus N\to \BR^{\ge 0}$ is a proper map. Indeed, for $x\in X\setminus\ti N$, $\ti\psi(x)\le a$ implies that $\phi_{\overline{\gc}}(x)\le a$ for every $\gc\in\gC\setminus\{1\}$ which implies that $d_\gc(x)\ge f^{-1}(a)$ if $\gc$ is parabolic, and that $d_\gc(x)\ge \frac{f^{-1}(a)}{\mu}$ if $\gc$ is elliptic or hyperbolic. Thus the injectivity radius at $\pi (x)$ is at least $\frac{f^{-1}(a)}{2\mu}$. It follows that $\psi^{-1}([0,a])$ is contained in the $\big(\frac{f^{-1}(a)}{2\mu}\big)$-thick part of $M$, which is compact since $M$ has finite volume. 

The result follows by standard Morse theory (c.f. \cite{Milnor}, Theorem 3.1).
\end{proof}

An immediate consequence of Theorem \ref{thm:retract} is that $\psi_{\le\gd}$ is nonempty for every $\gd>0$. The proof of \ref{thm:retract} also shows that $\psi_{\le\gd}$ is compact. Since 
$$
 \ti\psi_{\le 0}=\bigcap_{\gd>0}\ti\psi_{\le\gd}.
$$
we derive that the same hold for $\gd=0$:

\begin{cor}
For every $\gd\ge 0$, $\psi_{\le\gd}$ is nonempty and compact.
\end{cor}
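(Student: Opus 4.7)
The plan is to split into the cases $\gd > 0$ and $\gd = 0$, with the first case essentially already handled in the proof of Theorem \ref{thm:retract}. For $\gd > 0$, Theorem \ref{thm:retract} furnishes a deformation retract $r_\gd : M\setminus N \to \psi_{\le\gd}$, and since $M\setminus N$ is nonempty (it contains any point in $M$ that is not a ramification point and not on one of the finitely many short closed geodesics in a compact piece), this already shows $\psi_{\le\gd}\ne\emptyset$. For compactness, I would quote the properness argument given inside the proof of Theorem \ref{thm:retract}: if $\psi(x)\le\gd$ then one reads off a uniform lower bound $\frac{f^{-1}(\gd)}{2\mu}$ for the injectivity radius at $x$, so $\psi_{\le\gd}$ is contained in the corresponding thick part of $M$, which is compact because $M$ has finite volume. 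Since $\psi_{\le\gd} = \psi^{-1}([0,\gd])$ is also closed in $M\setminus N$, and in fact closed in $M$ (points of $N$ are limits of points where $\psi\to\infty$), it is compact.

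For $\gd = 0$, my plan is to pass to the intersection already written down in the text: $\ti\psi_{\le 0} = \bigcap_{\gd>0}\ti\psi_{\le\gd}$, which, after projecting by the $\gC$-invariant map $\pi$ and using $\psi\ge 0$, gives
\[
 \psi_{\le 0} \;=\; \bigcap_{\gd>0}\psi_{\le\gd}.
\]
This is a nested decreasing family of nonempty compact subsets of the Hausdorff space $M$ (by the $\gd>0$ case), so by the finite intersection property it is nonempty, and being a closed subset of any individual compact $\psi_{\le\gd}$ it is itself compact. There is no real obstacle here: the only thing to be slightly careful about is the identity $\psi_{\le 0} = \bigcap_{\gd>0}\psi_{\le\gd}$, which follows at once from $\psi\ge 0$ and the elementary fact that $\psi(x)\le\gd$ for all $\gd>0$ forces $\psi(x)=0$.
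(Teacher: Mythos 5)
Your proposal is correct and follows essentially the same route as the paper: nonemptiness and compactness for $\gd>0$ come from the deformation retract and the properness argument in the proof of Theorem \ref{thm:retract}, and the case $\gd=0$ is handled by the nested intersection $\psi_{\le 0}=\bigcap_{\gd>0}\psi_{\le\gd}$ of nonempty compact sets.
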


Moreover, when $X$ is not $\mathcal{H}^2$ we get:

\begin{cor}
Assume that $\dim X>2$. Then is $\psi_{\le\gd}$ connected for every $\gd\ge 0$.
\end{cor}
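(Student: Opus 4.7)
The plan is to propagate connectedness along the chain
\[
  X\setminus\tilde N \;\twoheadrightarrow\; M\setminus N \;\xrightarrow{\,r_\delta\,}\; \psi_{\le\delta}\qquad(\delta>0),
\]
and then handle $\delta=0$ by a nested intersection argument.

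First I would observe that since $\dim X>2$ we have $X\not\cong\mathcal{H}^2$, so by the codimension statement established right after Corollary \ref{cor:codim}, $\tilde N$ is a locally finite union of totally geodesic submanifolds of codimension $\ge 2$ in $X$. Removing such a set from a connected smooth manifold leaves a (path-)connected complement, hence $X\setminus\tilde N$ is connected. The restricted projection $\pi:X\setminus\tilde N\to M\setminus N$ is continuous and surjective, so $M\setminus N$ is connected as well.

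Next, for any $\delta>0$, Theorem \ref{thm:retract} provides a deformation retraction $r_\delta:M\setminus N\to\psi_{\le\delta}$; in particular $r_\delta$ is a continuous surjection from a connected space onto $\psi_{\le\delta}$, so $\psi_{\le\delta}$ is connected.

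Finally, for $\delta=0$ I would write $\psi_{\le 0}=\bigcap_{n\ge 1}\psi_{\le 1/n}$, a nested intersection of nonempty compact connected subsets of the Hausdorff space $M$ (nonempty and compact by the preceding corollary, connected by the previous paragraph). A standard compactness argument then gives connectedness: a separation $\psi_{\le 0}=A\sqcup B$ into nonempty disjoint closed sets could be enclosed in disjoint open neighborhoods $U\supset A$, $V\supset B$ in $M$, and compactness of the $\psi_{\le 1/n}$ combined with the decreasing property would force $\psi_{\le 1/n}\subset U\cup V$ for $n\gg 1$, contradicting connectedness of $\psi_{\le 1/n}$. The only place where there is genuine content is the codimension statement invoked in Step 1, which is already available via Lemma \ref{lem:codim}; everything else is ``continuous image of connected is connected'' together with a textbook compactness argument, so I do not expect a real obstacle.
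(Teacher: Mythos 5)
Your proposal is correct and follows essentially the same route as the paper: codimension $\ge 2$ of $N$ (hence $M\setminus N$ connected), the deformation retraction from Theorem \ref{thm:retract} for $\delta>0$, and a nested-intersection-of-compact-connected-sets argument for $\delta=0$. The only cosmetic difference is that the paper, at this point, explicitly recalls why Corollary \ref{cor:codim} applies to every $\gc\in\gC\setminus\{1\}$ (namely, irreducibility of $\gC$ and triviality of the center force each such $\gc$ to project nontrivially to every factor of $G$), whereas you simply cite the earlier codimension statement; it would be cleaner to keep that hypothesis visible, but the substance is the same.
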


\begin{proof}
Since $\gC\le G$ is irreducible and $G$ is center-free, every $\gc\in\gC\setminus\{1\}$ projects nontrivially to every factor of $G$ (cf. \cite[Corollary 5.21]{Raghunathan}).
By Corollary \ref{cor:codim} $\text{codim}_MN\ge 2$ and hence $M\setminus N$ is connected. 
Thus by Theorem \ref{thm:retract} $\psi_{\le\gd}$ is connected for $\gd>0$, and since a descending intersection of compact connected sets is connected, the same holds for $\gd=0$. 
\end{proof}

\begin{rem}\label{rem:K-M}
The nonemptiness of $\psi_{\le 0}$ implies the stronger version of the Kazhdan--Margulis theorem, since the injectivity radius at points of $\psi_{\le 0}$ is at least $\frac{\gep}{2\mu}$; The pre-image in $G$ of the $\frac{\gep}{2\mu}$-neighborhood of $K$ in $G/K$ is an open set which trivially intersect a conjugate of every lattice. Furthermore this set contains a maximal compact subgroup of $G$.
\end{rem}


\subsection{Finishing the proof}\footnote{From here the proof continues as in the special rank one case considered in \cite{BGLS}.}
Assume that $G$ is not locally isomorphic to $\PSL_2(\BR)$. For $G=\PSL_2(\BR)$ the theorem follows from the Gauss--Bonnet formula and the explicate presentation of surface groups.

Since $\ti\psi_{\le\gd}$ is connected nonempty and $\gC$ acts freely on it (as all the fixed points of nontrivial elements are in the singular submanifold $\ti N$), we conclude, from standard covering theory, that:

\begin{cor}
For every $\gd\ge 0$,
$\gC$ is a quotient of $\pi_1(\psi_{\le\gd})$ --- the fundamental group of $\psi_{\le\gd}=\pi (\ti\psi_{\le\gd})=\gC\backslash\ti\psi_{\le\gd}$.
\end{cor}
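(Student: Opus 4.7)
The plan is purely topological: standard covering-space theory applied to the map $\ti\psi_{\le\gd}\to\psi_{\le\gd}$. The ingredients gathered so far — compactness, nonemptiness, and connectedness of $\psi_{\le\gd}$, together with disjointness from the singular locus — have been arranged precisely so that this map is a regular cover with deck group $\gC$, and the conclusion is then immediate from the short exact sequence of a regular covering on $\pi_1$.

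First I verify that $\ti\psi_{\le\gd}$ itself is connected and nonempty. Nonemptiness is inherited from the preceding corollary on $\psi_{\le\gd}$. For connectedness, one combines Proposition \ref{prop:gradient} with the Morse-theoretic argument of Theorem \ref{thm:retract} to obtain a deformation retract of $X\setminus\ti N$ onto $\ti\psi_{\le\gd}$ for each $\gd>0$; since we are in the case $\dim X>2$ (the case $G\cong\PSL_2(\BR)$ being handled separately), Corollary \ref{cor:codim} gives $\text{codim}_X\ti N\ge 2$, so $X\setminus\ti N$ is connected and hence so is $\ti\psi_{\le\gd}$. The $\gd=0$ case then follows by writing $\ti\psi_{\le 0}=\bigcap_{\gd>0}\ti\psi_{\le\gd}$ as a descending intersection of compact connected sets.

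Next I check that the $\gC$-action on $\ti\psi_{\le\gd}$ is free and properly discontinuous. Proper discontinuity is inherited from the action of the lattice $\gC$ on $X$. Freeness is built in: any nontrivial $\gc\in\gC$ with a fixed point $x$ satisfies $x\in\text{Fix}(\gc)\subset\Min(\overline{\gc})\subset\ti N$, while $\ti\psi$ is defined only on $X\setminus\ti N$ and blows up near $\ti N$, so $\ti\psi_{\le\gd}$ is disjoint from $\ti N$.

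Consequently $\ti\psi_{\le\gd}\to\gC\backslash\ti\psi_{\le\gd}=\psi_{\le\gd}$ is a regular covering with deck-transformation group $\gC$, and the standard short exact sequence
$$
1\longrightarrow\pi_1(\ti\psi_{\le\gd})\longrightarrow\pi_1(\psi_{\le\gd})\longrightarrow\gC\longrightarrow 1
$$
exhibits $\gC$ as a quotient of $\pi_1(\psi_{\le\gd})$. There is no real obstacle here: the genuine content sits in the earlier propositions that established connectedness of $\ti\psi_{\le\gd}$ and disjointness of the fixed-point locus from it; given these, the corollary is a formal consequence of covering-space theory.
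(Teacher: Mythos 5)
Your argument is the paper's argument: the paper's entire proof consists of the remark that $\ti\psi_{\le\gd}$ is nonempty and connected and that $\gC$ acts freely on it (fixed points of nontrivial elements lie in $\ti N$, which $\ti\psi_{\le\gd}$ avoids), followed by an appeal to standard covering theory; your write-up fills in exactly these steps, including proper discontinuity from discreteness of $\gC$ and connectedness of $\ti\psi_{\le\gd}$ for $\gd>0$ via the lifted retraction of $X\setminus\ti N$ and Corollary \ref{cor:codim}.

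One slip to flag, in the case $\gd=0$: you justify connectedness of $\ti\psi_{\le 0}$ by calling $\ti\psi_{\le 0}=\bigcap_{\gd>0}\ti\psi_{\le\gd}$ a descending intersection of \emph{compact} connected sets, but the sublevel sets upstairs are not compact --- they are $\gC$-invariant with compact image in $M$, and $\gC$ is infinite. The compactness argument lives downstairs, where it gives compactness and connectedness of $\psi_{\le 0}$; a nested intersection of closed connected noncompact sets need not be connected, so your justification at $\gd=0$ does not go through as written, and connectedness of $\ti\psi_{\le 0}$ (not just of $\psi_{\le 0}$) is genuinely what the covering-theory surjection onto all of $\gC$ requires --- if $\ti\psi_{\le 0}$ were disconnected one would only obtain a surjection onto the stabilizer of a component. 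To be fair, the paper is equally terse here and simply asserts connectedness of $\ti\psi_{\le\gd}$ for all $\gd\ge 0$; note also that the only case used in the final counting argument is $\gd=\gd_0>0$, where your retraction argument is complete. For $\gd=0$ a separate argument (or simply restricting the statement to $\gd>0$) is needed.
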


Instead of rescaling the Haar measure of $G$, let us suppose that it is fixed and corresponds to the Riemannian measure of $X$, and prove that there is a constant $C(G)$ such that $d(\gC)\le C(G)\vol(G/\gC)$.
Recall from the proof of Theorem \ref{thm:retract} that $\psi_{\le 0}$ is contained in the $\alpha$-thick part of $M$ for $\ga=\frac{\gep}{2\mu}$. Let $\mathcal{S}$ be a maximal $\ga$-discrete subset of $\psi_{\le 0}$. For $t>0$ denote by $v (t)$ the volume of a $t$-ball in $X$.
Since the $\ga/2$ balls centered at points of $\mathcal{S}$ are pairwise disjoint and isometric to a $\ga/2$ ball in $X$, the size of $\mathcal{S}$ is bounded by $\vol (M)/v(\frac{\ga}{2})$. Moreover, the union of the $\ga$ balls centered at points of $\mathcal{S}$ covers $\psi_{\le 0}$. Denote this union by $U$. Choose $\gd_0>0$ sufficiently small so that $\psi_{\le\gd_0}$ is contained in $U$.

\begin{lem}
$\pi_1(\psi_{\le\gd_0})$ is a quotient of $\pi_1(U)$.
\end{lem}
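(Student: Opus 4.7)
The plan is to extract the desired surjection directly from the deformation retract of Theorem \ref{thm:retract}, restricted to the open set $U$.

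First I would verify that $U\subset M\setminus N$, so that the deformation retract is defined on $U$. Recall from the proof of Theorem \ref{thm:retract} that $\psi_{\le 0}$ lies in the $\ga$-thick part of $M$ for $\ga=\frac{\gep}{2\mu}$, and $U$ is the union of open $\ga$-balls centered at the points of $\mathcal{S}\subset\psi_{\le 0}$. Since the injectivity radius at each such center is at least $\ga$, each of these balls lifts isometrically to a ball in $X$; in particular it avoids the ramification locus of $\pi$, and it cannot contain a closed geodesic of length less than $\gep$. Hence $U\cap N=\emptyset$.

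Next, by Theorem \ref{thm:retract} there is a deformation retract $r:=r_{\gd_0}:M\setminus N\to\psi_{\le\gd_0}$, which by definition is the identity on $\psi_{\le\gd_0}$. Since $\psi_{\le\gd_0}\subset U\subset M\setminus N$, the restriction $r|_U:U\to\psi_{\le\gd_0}$ is a (not necessarily strong) retraction of $U$ onto $\psi_{\le\gd_0}$: it is continuous, takes values in $\psi_{\le\gd_0}$, and fixes $\psi_{\le\gd_0}$ pointwise.

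The lemma then follows from the standard fact that if $i:A\hookrightarrow X$ is a subspace inclusion and $\rho:X\to A$ is a retraction (i.e.\ $\rho\circ i=\mathrm{id}_A$), then $\rho_*\circ i_*=\mathrm{id}$ on $\pi_1(A)$, so $\rho_*:\pi_1(X)\to\pi_1(A)$ is a (split) surjection. Applied to $A=\psi_{\le\gd_0}$, $X=U$ and $\rho=r|_U$, this exhibits $\pi_1(\psi_{\le\gd_0})$ as a quotient of $\pi_1(U)$. There is no real obstacle in this step; the only point requiring care is the inclusion $U\subset M\setminus N$, which is immediate from the thick-part estimate recalled above.
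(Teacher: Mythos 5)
Your argument is correct and is essentially the paper's proof: both use the restriction of $r_{\gd_0}$ to $U$ as a retraction onto $\psi_{\le\gd_0}$ and invoke the standard fact that $r_*\circ i_*=\mathrm{id}$ forces $r_*$ to be onto. You are right to flag that $U\subset M\setminus N$ requires a check which the paper takes for granted; note, though, that the cleanest justification is not the ``lifts isometrically'' heuristic but the observation that for $\ti s\in\ti\psi_{\le 0}$ the definition of $\psi$ forces $d_{\overline{\gc}}(\ti s)\ge\gep/\mu$ (elliptic case) or $d_{\overline{\gc}}(\ti s)\ge\inf d_{\overline{\gc}}+\gep$ (short hyperbolic case), and then the $2$-Lipschitz property of $d_{\overline{\gc}}$ gives $d(\ti s,\ti N)\ge\ga$, so the open $\ga$-balls of $U$ avoid $N$.
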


\begin{proof}
The inclusion $i:\psi_{\le\gd_0}\to U$ induces a map $i_*:\pi_1(\psi_{\le\gd_0})\to\pi_1(U)$, and the deformation retract $r=r_{\gd_0}$ restricted to $U$, $r:U\to \psi_{\le\gd_0}$ induces a map $r_*:\pi_1(U)\to\pi_1(\psi_{\le\gd_0})$. Since $r\circ i$ is the identity on $\psi_{\le\gd_0}$, we see that $r_*\circ i_*$ is the identity on $\pi_1(\psi_{\le\gd_0})$. It follows that $r_*:\pi_1(U)\to\pi_1(\psi_{\le\gd_0})$ is onto.
\end{proof}

Since the sectional curvature is nonpositive on $M$ the $\ga$-balls centered at points of $\mathcal{S}$ are convex, and hence any nonempty intersection of such is convex, hence contractible. Thus these balls form a good cover of $U$, in the sense of \cite{BoTu}, and the nerve $\mathfrak{N}$ of this cover is homotopic to $U$. Now $\pi_1(U)\cong\pi_1(\mathfrak{N})$ has a generating set of size $E(\mathfrak{N})$ --- the number of edges of the $1$-skeleton $\mathfrak{N}^1$. To see this one may choose a spanning tree $\mathcal{T}$ to the graph $\mathfrak{N}^1$ and pick one generator for each edge belonging to $\mathfrak{N}^1\setminus\mathcal{T}$. 
Finally note that the edges of $\mathfrak{N}$ correspond to pairs of points in $\mathcal{S}$ which are of distance at most $2\ga$. Thus the degree of each vertex of $\mathfrak{N}$ is at most $v(2.5\ga)/v(\ga/2)$. Thus
$$
 d(\gC)\le |E(\mathfrak{N})|\le\frac{\vol(M)\cdot v(2.5\ga)}{2v(\ga/2)^2},
$$
and we can take $C(G)=\frac{v(2.5\ga)}{2v(\ga/2)^2}$.\qed

\begin{rem}
The same proof with minor changes applies in the more general context of analytic Hadamard spaces with bounded curvature, as long as the analog of Lemma \ref{lem:codim} is satisfied: {\it For every $d$ there is a constant $C$ such that for any complete $d$-dimensional analytic Riemannian manifold $M$ with sectional curvature bounded between $0$ and $-1$, we have}
$$
 d(\pi_1(M))\le C\vol (M),
$$
provided the universal cover $\ti M$ does not admit a submanifold of co-dimension $\le 1$ with a nontrivial Euclidean de-Rahm factor
\end{rem}

\medskip

\noindent {\bf Acknowledgment:} {\it The author acknowledges the financial support from the European
Research Council (ERC)/ grant agreement 203418.}\\


\noindent {\bf Email:} gelander@math.huji.ac.il

\medskip

\noindent {\bf Address:} Einstein Institute of Mathematics, 
The Hebrew University,
Jerusalem, 91904, Israel

\end{document}